\documentclass[11pt]{article}
\usepackage[a4paper]{geometry}
\usepackage{amssymb, amsmath, amsthm, amsfonts, amscd}
\usepackage[english]{babel}
\usepackage{graphicx}
\usepackage{caption}
\usepackage{subcaption}
\usepackage{color,soul}
\usepackage{booktabs,dcolumn}                     
\usepackage[figureposition=bottom]{caption}       
\usepackage[usenames,dvipsnames,svgnames,table]{xcolor}
\usepackage{authblk}
\usepackage[title]{appendix}

\newtheorem{theorem}{Theorem}

\newtheorem*{theorem*}{Theorem}
\newtheorem{lemma}{Lemma}

\newtheorem*{assumption*}{Assumption}

\newtheorem{remark}{Remark}

\numberwithin{equation}{section}
\numberwithin{theorem}{section}
\numberwithin{remark}{section}
\numberwithin{prop}{section}
\numberwithin{corollary}{section}
\numberwithin{lemma}{section}





%




\title{Explicit asymptotics for certain single and double exponential sums}
\author[1]{K. Kalimeris} 
\author[1,2]{A. S. Fokas}
\affil[1]{\footnotesize Department of Applied Mathematics and Theoretical Physics, University of Cambridge, CB3 0WA, UK,}
\affil[2]{Viterbi School of Engineering, University of Southern California, Los Angeles, California, 90089-2560, USA. }

\begin{document}

\maketitle

\begin{abstract}
By combining classical techniques together with two novel asymptotic identities contained in \cite{FL}, we analyse certain single sums of Riemann-zeta type. In addition, we analyse Euler-Zagier double exponential sums for particular values of $Re\{u\}$ and $Re\{v\}$ and for a variety of sets of summation, as well as particular cases of Mordell-Tornheim double sums. Some of these results are used in \cite{Fs} where a novel approach to the Lindel{\"o}f hypothesis is presented.
\end{abstract}

\noindent
{\it Keywords}:


\section{Introduction}

The Riemann hypothesis, perhaps the most celebrated open problem in the history of mathematics, is valid iff $\zeta(s)\neq 0, \ s=\sigma+i t, \ 0<\sigma<\frac{1}{2}, \ t>0,$ where $\zeta(s)$ denotes the Riemann zeta function. This hypothesis has been verified numerically for $t$ up to order $O\left(10^{13}\right)$, thus the basic problem reduces to the relevant proof for large $t$. The study of the large $t$ asymptotics of the Riemann zeta function, which has a long and illustrious history, is deeply related with the Lindel{\"o}f hypothesis.

The large $t$ asymptotics to all orders of $\zeta(s)$ is studied in \cite{FL}. Furthermore, a novel approach to the Lindel{\"o}f hypothesis is presented in \cite{Fs}. The analysis of some of the formulae appearing in \cite{Fs} requires the analysis of certain single and double exponential sums. 
Here, motivated by the appearance of the above single and double Riemann-zeta type sums in connection with the Lindel\"of hypothesis, we revisit such sums. In particular, in section 2 we revisit a novel identity derived in \cite{FL} and also, using the results of \cite{FL}, we present a variant of the above identity. These two identities, used by themselves or in combination with classical techniques \cite{T}, allow us to derive several estimates in a simpler way than using only the classical techniques. In section 2 we also derive some estimates for certain specific Riemann-zeta type single sums; these sums arise in sections 3 and 5 as a result of using the identities discussed in section 2.1 for estimating double Riemann-type sums. In section 3 we derive some simple estimates for double Riemann-zeta type exponential sums, we review some well known estimates for the Euler-Zagier sums defined on the critical strip $0\leq \sigma \leq 1$, and establish a connection between these two types of sums. Some of the results of this section are derived via the results of section 2. In section 4 we provide sharp estimates for particular cases of Euler-Zagier and Mordell-Tornheim sums. 
 In section 5, we derive estimates for two types of double exponential sums, denoted by $S_1$ and $S_2$ which involve ``smal" sets.  The analysis of $S_1$ is also based on the results of section 2 and illustrates the fact that double sums involving ``small'' sets can be studied via the variant of the identities of \cite{FL} presented in section 2.1, in a simpler way than using classical estimates. Furthermore, and more importantly, this novel approach yields sharp results. This fact is further demonstrated in the analysis of $S_2$: this sum can be studied directly via classical estimates or even via ``rough" estimates, however the above novel approach yields significantly sharper results; details are given in section 5.


\vspace{3mm}
\underline{Notation}
\vspace{1mm}

$ \qquad [A] =$ integer part of $A$.

\section{Asymptotic estimates and identities of certain single exponential sums}

In this section we analyse sums of the type
\begin{equation}\label{sf0}
\sum_{m=A(t)}^{B(t)} e^{  if(m) }, \qquad 1\leq A(t) < B(t),
\end{equation}
for the following three particular cases
\begin{equation} \label{sf1}
f(m)=  t \ln{\left( 1 +\frac{t}{m}\right)}, \quad t>0, \quad m\in\mathbb{Z}^{+},
\end{equation}
\begin{equation} \label{sf2}
f(m)=  t \ln{\left( 1 +\frac{m}{t}\right)}, \quad t>0, \quad m\in\mathbb{Z}^{+}.
\end{equation}
and
\begin{equation} \label{sf3}
f(m)=  t \ln{m}, \quad t>0, \quad m\in\mathbb{Z}^{+}.
\end{equation}

The case \eqref{sf3} corresponds to the classical exponential sums related to Riemann zeta function. In this case,  partial summation and the Phragm\'en-Lindel{\"o}f convexity principle (PL)  (known also as Lindel{\"o}f's theorem) implies
\begin{equation} \label{est0}
\sum_{m=1}^{[t]} \dfrac{1}{m^\sigma}e^{  it \ln{m} } =\sum_{m=1}^{[t]}m^{-\sigma+it} = \begin{cases}
O \left(  t^{\frac{1}{2}-\frac{2}{3}\sigma} \ln{t} \right), & 0\leq\sigma\leq \frac{1}{2},\\
O \left(  t^{\frac{1}{3}-\frac{1}{3}\sigma} \ln{t}  \right), & \frac{1}{2} < \sigma< 1.
\end{cases}
\end{equation}

The exponents $\ \frac{1}{2}-\frac{2}{3}\sigma \ $ and $\ \frac{1}{3}-\frac{1}{3}\sigma \ $ have been improved only slightly in the last 100 years with the best current result due to Bourgain \cite{B}.

\subsection{Two useful asymptotic identities}

In what follows we present a slight variant of two useful asymptotic identities derived in \cite{FL}.

Below we study sums of the type \eqref{sf0} with $f(m)$ given by \eqref{sf3}: the cases (i) and (ii)  correspond to  $t \leq A(t) < B(t)$ and $ A(t) < B(t) = \mathcal{O}(t)$, respectively.

\begin{lemma}\label{l2.1}
\begin{itemize}

 \item[(i)]
\begin{align}  \label{FL-eq}
\sum_{n=[t]+1}^{\left[\frac{\eta}{2\pi }\right]}\frac{1}{n^{s}}&=\frac{1}{1-s}\left(\frac{\eta }{2\pi}\right)^{1-s}+O\left(\frac{1}{t^{\sigma }}\right), 
\qquad s=\sigma +it, \quad  t<\frac{\eta}{2\pi} <\infty , \notag \\
&0\leq\sigma<1, \quad t\to \infty .
\end{align}

\item[(ii)]  
\begin{multline} \label{FR}
\sum_{n=\left[ \frac{t}{\eta_2}\right] +1 }^{\left[ \frac{t}{\eta_1}\right]} \frac{1}{n^s} = \chi(s) \sum_{n=\left[ \frac{\eta_1}{2\pi}\right] +1 }^{\left[ \frac{\eta_2}{2\pi}\right]}  \frac{1}{n^{1-s}} + E(\sigma,t,\eta_2) -  E(\sigma,t,\eta_1), \quad t\to\infty,\\
0<\sigma<1, \quad \varepsilon<\eta_1<\eta_2<\sqrt{t}, \quad \varepsilon>0; \quad {\operatorname{dist}(\eta_j, 2\pi \mathbb{Z})}>\varepsilon, \quad j=1,2,
\end{multline}
where as $ t\to \infty$,
\begin{subequations} \label{seventhirtysix}
\begin{multline} \label{seventhirtysixa}
E(\sigma,t,\eta) = e^{i\gamma} \left(  \frac{\eta}{t} \right) ^s \left(  1+ O\left(  \frac{1}{t} \right) \right) \\
\times \left\{ \frac{1}{\alpha} + \frac{i}{2\alpha^3} \frac{\eta^2}{t} \left[  \frac{\alpha^2}{\eta^2}(\beta^2+\sigma-1) -2 \frac{\alpha\beta}{\eta} - \alpha +2 \right]  \right\} \\
+ \begin{cases}
      O\left( \frac{\eta}{t}  \right), & \varepsilon<\eta<t^{\frac{1}{3}}, \quad 3\eta^3<\alpha t, \\
      O\left( e^{-\frac{\alpha t}{\eta^2}}+  \frac{\eta^4}{t^2}  \right), & t^{\frac{1}{3}}<\eta<\sqrt{t}, \quad 3\eta^2<\alpha t, \\
   \end{cases}
\end{multline}
with $\alpha, \ \beta$ and $\gamma$,  defined by
\begin{equation} \label{seventhirtysixb}
\alpha(\eta) = 1 - e^{-i\eta}, \quad \eta>0,
\end{equation}
\begin{equation} \label{seventhirtysixc}
\beta(\sigma, t, \eta) = t- \eta \left[  \frac{t}{\eta} \right] - i(\sigma-1), \quad 0<\sigma<1, \quad t>0, \quad \eta>0,
\end{equation}
\begin{equation}
\gamma(t,\eta)=t-\eta-\eta \left[  \frac{t}{\eta}\right], \quad t>0, \quad \eta>0,
\end{equation}
and $\chi(s)$ defined by
\begin{equation} \label{seventhirtysixd}
\chi(s) = \frac{(2\pi)^s}{\pi} \sin{\left( \frac{\pi s}{2} \right)} \Gamma(1-s), \quad s\in\mathbb{C}.
\end{equation}
\end{subequations}

The above results are valid uniformly with respect to $\eta $ and $\sigma $.

\end{itemize}
\end{lemma}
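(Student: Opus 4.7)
The plan is to derive both identities as (slight variants of) consequences of the asymptotic machinery established in \cite{FL}, and I will treat the two parts separately.

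For part (i), the key observation is that for $n\geq [t]+1$ the phase $-t\ln n$ has derivative of modulus $t/n\leq 1$, so the sum sits in the \emph{non-stationary} regime and can be replaced by a smooth integral with a small correction. I would apply Euler--Maclaurin summation (equivalently, Poisson summation combined with the Fourier expansion of the sawtooth $B_1(\{x\})$). The leading term is
$$
\int_{[t]}^{[\eta/(2\pi)]} x^{-s}\,dx = \frac{1}{1-s}\left(\left[\frac{\eta}{2\pi}\right]^{1-s}-[t]^{1-s}\right).
$$
Replacing $[\eta/(2\pi)]^{1-s}$ by $(\eta/(2\pi))^{1-s}$ introduces an error of size $|s|\,(\eta/(2\pi))^{-\sigma}/|1-s|=O(t^{-\sigma})$, while the boundary term $[t]^{1-s}/(1-s)$ is also $O(t^{-\sigma})$ since $|[t]^{1-s}|\leq t^{1-\sigma}$ and $|1-s|\asymp t$. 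The Euler--Maclaurin remainder, written in its Fourier form, is a sum of oscillatory integrals $\int x^{-s-1}e^{\pm 2\pi ikx}dx$ with $k\geq 1$ whose phase derivative $\pm 2\pi k-t/x$ stays bounded away from zero in the range $n\geq t$; iterated integration by parts then delivers an $O(t^{-\sigma})$ bound, reproducing the required estimate essentially as in \cite{FL}.

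For part (ii), the structure of the identity is precisely the Riemann--Siegel-type duality captured by the main identity of \cite{FL}: the dual sum $\sum n^{s-1}$ for $n\in[\eta_1/(2\pi),\eta_2/(2\pi)]$ reflects the original sum over $n\in[t/\eta_2,t/\eta_1]$, with the reflection coefficient $\chi(s)$ coming from the functional equation of $\zeta$. I would not reprove this identity from scratch. Instead, the plan is to apply the formula of \cite{FL} at each endpoint $\eta=\eta_1$ and $\eta=\eta_2$ separately and subtract, so that the ``continuous'' interior contributions cancel and one is left with $\chi(s)$ times the dual sum plus the boundary remainders $E(\sigma,t,\eta_2)-E(\sigma,t,\eta_1)$. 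In \cite{FL} these boundary terms are produced as saddle-point integrals near $x=t/\eta$, and the quantities $\alpha(\eta)=1-e^{-i\eta}$, $\beta(\sigma,t,\eta)$ and $\gamma(t,\eta)$ appear exactly as coefficients in the stationary-phase expansion of $\exp\bigl(-s\ln x+2\pi i[t/\eta]x\bigr)$ around its critical point; $\alpha$ encodes the linearization of the fractional part at the saddle, while $\gamma$ is the residual phase left after pulling out the dominant exponential.

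The main technical obstacle is justifying the case split in the remainder of \eqref{seventhirtysixa}, which reflects the trade-off between the stationary-phase correction and the non-stationary tail. For $\varepsilon<\eta<t^{1/3}$ the cubic term of the phase is negligible and standard van der Corput-type bounds yield the $O(\eta/t)$ error. For $t^{1/3}<\eta<\sqrt{t}$ one must instead localise the integral to a neighbourhood of the saddle of radius $\sim \eta/\sqrt{t}$, obtaining the sharper $O(\eta^4/t^2)$ contribution from the truncated Taylor expansion of the phase together with the exponentially small tail $O(e^{-\alpha t/\eta^2})$ from the discarded region. Verifying that the constants are uniform in $\eta$ and in $\sigma\in(0,1)$ throughout, and that the hypotheses $3\eta^3<\alpha t$ and $3\eta^2<\alpha t$ genuinely guarantee an isolated saddle inside the relevant interval, is the delicate bookkeeping step I would expect to absorb most of the proof-writing effort.
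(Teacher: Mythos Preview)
Your overall strategy is sound, but you are doing far more work than the lemma requires, and you miss the one small computation the paper actually performs.

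For part (i), the paper simply quotes the result as equation (1.9) of \cite{FL} with the choices $\eta_1=2\pi t$, $\eta_2=\eta$; no Euler--Maclaurin argument is reproduced. Your direct approach via Euler--Maclaurin is a legitimate alternative and would yield a self-contained proof, whereas the paper treats this as a black-box citation.

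For part (ii), your central idea --- apply the \cite{FL} formula at $\eta_1$ and at $\eta_2$ and subtract --- is exactly what the paper does. But you then describe at length how to justify the saddle-point remainder in $E(\sigma,t,\eta)$, the case split $\eta\lessgtr t^{1/3}$, the uniformity in $\eta,\sigma$, etc. None of that is part of this lemma: the formula the paper subtracts is equation (4.2) of \cite{FL}, in which all of that analysis is already encoded. The only genuine manipulation in the paper's proof is an application of Stirling in the form
\[
e^{-\frac{i\pi s}{2}}\Gamma(1-s)=\sqrt{2\pi}\,e^{it}t^{\frac{1}{2}-s}e^{-\frac{i\pi}{4}}\bigl(1+O(t^{-1})\bigr),
\]
which converts the raw prefactor in (4.2) of \cite{FL} into the explicit shape $e^{i\gamma}(\eta/t)^s(1+O(1/t))\{\cdots\}$ of \eqref{seventhirtysixa}. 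You omit this step, yet it is the only place any work is done; conversely, the ``main technical obstacle'' you identify is not an obstacle here at all.
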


\begin{proof}
\begin{itemize}
\item[(i)] Equation \eqref{FL-eq} is given by equation (1.9) of \cite{FL}, with
 $$\eta_1=2\pi t> (1+\epsilon)t \qquad \text{ and } \qquad  \eta_2=\eta>\eta_1,$$
 for some $\epsilon>0$.
\item[(ii)] Regarding \eqref{FR}, we first recall equation (4.2) of \cite{FL}:
\begin{multline} \label{seventhirtyseven}
\zeta(s) = \sum_{n=1}^{\left[  \frac{t}{\eta} \right]} \frac{1}{n^s} + \chi(s) \sum_{n=1}^{\left[  \frac{\eta}{2\pi} \right]} \frac{1}{n^{1-s}} \\
+ ie^{-\frac{i\pi s}{2}} \frac{\Gamma(1-s)}{\sqrt{2\pi}}e^{-i \left( \left[\frac{t}{\eta} \right] + 1 \right) \eta} e^{-\frac{i\pi}{4}} \frac{\eta^s}{\sqrt{t}} \\
\times \left\{ \frac{1}{\alpha} + \frac{i}{2\alpha^3} \frac{\eta^2}{t} \left[  \frac{\alpha^2}{\eta^2}(\beta^2+\sigma-1) -2 \frac{\alpha\beta}{\eta} - \alpha +2 \right]  \right\} \\
+ e^{-i\pi s} \Gamma(1-s) e^{-\frac{\pi t}{2}} \eta^{\sigma-1} \begin{cases}
      O\left( \frac{\eta}{t}  \right), & \varepsilon<\eta<t^{\frac{1}{3}}, \quad 3\eta^3<\alpha t, \\
      O\left( e^{-\frac{\alpha t}{\eta^2}}+  \frac{\eta^4}{t^2}  \right), & t^{\frac{1}{3}}<\eta<\sqrt{t}, \quad 3\eta^2<\alpha t, \\
   \end{cases}\\
{\operatorname{dist}(\eta, 2\pi \mathbb{Z})}>\varepsilon, \quad 0<\sigma<1, \quad t\to\infty.
\end{multline}
The asymptotic formula 
\begin{equation*} \label{2.19b}
\Gamma(\sigma-i\xi) = \sqrt{2\pi}\xi^{\sigma-\frac{1}{2}}e^{-\frac{\pi \xi}{2}}e^{\frac{i\pi}{4}}e^{i\xi}\xi^{-i\xi}
e^{-\frac{i\pi \sigma}{2}} \left[ 1 + O\left(\frac{1}{\xi}\right) \right], \quad \xi \to\infty,
\end{equation*}
which is proven in the Appendix A of \cite{FL}, implies
\begin{equation*}
e^{-\frac{i\pi s}{2}} \Gamma(1-s) = \sqrt{2\pi} e^{it}t^{\frac{1}{2}-s} e^{-\frac{i\pi}{4}} \left(  1 + O\left( \frac{1}{t}  \right) \right), \quad t\to\infty.
\end{equation*}
Thus, equation \eqref{seventhirtyseven} becomes
\begin{equation*}
\zeta(s) = \sum_{n=1}^{\left[  \frac{t}{\eta} \right]} \frac{1}{n^s} + \chi(s) \sum_{n=1}^{\left[  \frac{\eta}{2\pi} \right]} \frac{1}{n^{1-s}} + E(\sigma,t,\eta),
\end{equation*}
with $E$ defined in \eqref{seventhirtysixa}. Evaluating this expression for two different values of $\eta$, namely $\eta_1$ and $\eta_2$, where  $0<\varepsilon<\eta_1<\eta_2<\sqrt{t}, \ $ and subtracting the resulting equations we obtain \eqref{FR}.
\end{itemize}
\end{proof}

\begin{remark}
Equation \eqref{FL-eq} is a special form of the general case
\begin{align*}
\sum_{n=[\tau]+1}^{\left[\frac{\eta}{2\pi }\right]}\frac{1}{n^{s}}=\frac{1}{1-s}\left(\frac{\eta }{2\pi}\right)^{1-s}+O\left(\frac{1}{t^{\sigma }}\right), 
\qquad0\leq\sigma<1, \quad t\to \infty,
\end{align*}
where $\tau=O(t)$, provided that $\tau > (1+\epsilon)\frac{t}{2\pi},$ for some $\epsilon>0$.

In connection with equation \eqref{FR}, the definitions of $\alpha,\beta, \gamma$ yield the following bounds:
 $$|\alpha|>\varepsilon,\qquad 0<|\beta|<\eta+1, \qquad 0<|\gamma|\leq \eta.$$ 
\end{remark}

\subsection{Asymptotic estimates of single sums}

In the following two Lemmas we consider \eqref{sf0}  and set $A(t)=1$, $B(t)=[t]$, with $f(m)$ given by \eqref{sf1} and \eqref{sf2}.

\begin{lemma}\label{l2.2}
Let $f(m)$ be defined by \eqref{sf1}. Then
\begin{equation} \label{est1}
\sum_{m=1}^{[t]} \dfrac{1}{m^\sigma}e^{  if(m) } = \begin{cases}
O \left(  t^{\frac{1}{2}-\frac{2}{3}\sigma} \ln{t} \right), & 0\leq\sigma\leq \frac{1}{2},\\
O \left(  t^{\frac{1}{3}-\frac{1}{3}\sigma} \ln{t}  \right), & \frac{1}{2} < \sigma< 1,
\end{cases} \qquad t\to \infty.
\end{equation}
\end{lemma}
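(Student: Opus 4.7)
The strategy is to mirror the derivation of the classical bound \eqref{est0}, verifying that the phase $\phi(m) := t\ln\!\left(1+\tfrac{t}{m}\right)$ behaves on dyadic blocks of $[1,[t]]$ exactly like the classical phase $t\ln m$, so that partial summation, van der Corput's exponent-pair estimates, and the Phragm\'en-Lindel\"of (PL) convexity principle can be applied in the same way.

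The starting point is the factorization
\[
e^{if(m)} = \left(\frac{m+t}{m}\right)^{it} = (m+t)^{it}\, m^{-it},
\]
which recasts the sum as $\sum_{m=1}^{[t]} (m+t)^{it} m^{-\sigma-it}$. A direct computation gives
\[
\phi'(m) = -\frac{t^2}{m(m+t)}, \qquad \phi''(m) = \frac{t^2(2m+t)}{m^2(m+t)^2},
\]
and, by induction, $|\phi^{(k)}(m)| \asymp t/m^k$ uniformly on $1 \le m \le [t]$ (using $m+t \asymp t$). These are precisely the sizes of the derivatives of $t\ln m$, so on every dyadic block $I = [M,2M] \subseteq [1,[t]]$, the same van der Corput exponent pairs produce the same upper bounds for $\sum_{m \in I} e^{i\phi(m)}$ as for $\sum_{m \in I} m^{it}$, up to absolute constants.

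Next I would establish the three endpoint bounds. At $\sigma = 0$, van der Corput's second-derivative test on each dyadic block gives $\sum_{m \in [M,2M]} e^{i\phi(m)} = O(t^{1/2})$, and summing over the $O(\ln t)$ blocks yields $O(t^{1/2}\ln t)$. At $\sigma = 1/2$, applying the Hardy--Littlewood exponent pair (via the $B$-process) together with partial summation to insert the weight $m^{-1/2}$ yields $O(t^{1/6}\ln t)$. At $\sigma = 1$, partial summation plus the $\sigma=0$ block bound gives $O(\ln t)$. Finally, I would apply the PL principle on each of the strips $[0,1/2]$ and $[1/2,1]$ to the analytic function
\[
s \;\longmapsto\; \sum_{m=1}^{[t]} m^{-s}\,(m+t)^{it},
\]
considered for fixed $t$ and $[t]$, to interpolate linearly between the exponents at the endpoints. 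This produces the convex upper envelopes $t^{1/2-2\sigma/3}\ln t$ for $0\le\sigma\le 1/2$ and $t^{1/3-\sigma/3}\ln t$ for $1/2<\sigma<1$ claimed in \eqref{est1}.

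The principal difficulty lies in the $\sigma = 1/2$ endpoint, where the Hardy--Littlewood bound requires the $B$-process: Poisson summation transforms $\sum_{m \in I} e^{i\phi(m)}$ into a dual exponential sum whose phase is the Legendre transform of $\phi$. One must verify that this transform is smooth, that its second derivative has the expected size, and that the resulting dual sum admits the standard van der Corput second-derivative estimate. This is a routine but slightly delicate adaptation of the classical computation for $t\ln m$, and constitutes the only non-mechanical step; the other steps (dyadic decomposition, partial summation, PL interpolation) are then automatic.
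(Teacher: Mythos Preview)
Your approach is essentially the same as the paper's. Both arguments verify that $|f^{(k)}(x)|\asymp t/x^{k}$ on $[1,[t]]$, apply van der Corput--type exponential sum estimates to obtain the endpoint bounds $O(t^{1/2}\ln t)$ at $\sigma=0$ and $O(t^{1/6}\ln t)$ at $\sigma=1/2$, and then invoke the Phragm\'en--Lindel\"of convexity principle to interpolate across the two strips. The only cosmetic difference is that the paper cites Theorem~5.14 of \cite{T} directly (taking $k=2$ and $k=3$, with $\lambda_k=(k-1)!\,t(2\alpha)^{-k}(1-2^{-k})$ and $h=2^{k}/(1-2^{-k})$), whereas you unpack the same machinery as dyadic decomposition plus exponent pairs; what you flag as the ``principal difficulty'' --- the $B$-process/Poisson step --- is precisely the content that Theorem~5.14 already encapsulates, so quoting that theorem is the cleaner route.
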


\begin{proof}

 Observe that the $k$-th derivative of $f(x)$ satisfies
\begin{equation*}
f^{(k)}(x) = (-1)^{k-1} (k-1)! ~ t \left[ \frac{1}{(x+t)^k} - \frac{1}{x^k}  \right].
\end{equation*}
Thus,
\begin{equation*}
\left|f^{(k)}(x)\right| = (k-1)! ~ \frac{t}{x^k}~ C(x,t;k),
\end{equation*}
where $C(x,t;k)$ is defined by
\begin{equation*}
C(x,t;k) = \frac{1 + \sum\limits_{n=1}^{k-1} \binom {k} {n} \left( \frac {x} {t} \right)^n }{1 + \sum\limits_{n=1}^{k} \binom {k} {n} \left( \frac{x} {t} \right)^n }.
\end{equation*}
The function $C(x,t;k)$ is bounded, namely,
\begin{equation} \label{B.3}
1-2^{-k} <  C(x,t;k) < 1, \quad \text{for} \quad 1<x<t.
\end{equation}
Hence, we can use Theorem 5.14 of \cite{T} with
\begin{equation*}
\lambda_k = \frac{(k-1)!}{2\pi} \frac{t}{(2\alpha)^k} (1-2^{-k}),
\end{equation*}
and
\begin{equation*}
h =  \frac{2^k}{1-2^{-k}}, \quad k\ge 2.
\end{equation*}
Setting $A(t)=1$ and $B(t)=[t]$ in \eqref{sf0}, we define
\begin{equation}
D(\sigma,t)=\sum_{m=1}^{[t]} \frac{1}{m^{\sigma} }e^{if(m)},
\end{equation}
with $f(m)$ given by \eqref{sf1}.

For $k=2$, by applying the partial summation technique, we obtain
\begin{equation} \label{B.4a}
D(0,t) = O \left( t^{\frac{1}{2}} \ln{t} \right), \qquad  t\to \infty.
\end{equation}

Similarly, for $k=3$, we obtain
\begin{equation} \label{B.4}
D\left( \frac{1}{2} , t \right)
= O \left( t^{\frac{1}{6}} \ln{t} \right), \qquad  t\to \infty.
\end{equation}

\noindent We also note the following:

\begin{enumerate}
\item The Phragm\'en-Lindel{\"o}f convexity principle (PL) implies
\begin{equation*}
D(\sigma ,t) = \begin{cases}
O \left(  t^{\frac{1}{2}-\frac{2}{3}\sigma} \ln{t} \right), & 0\leq\sigma\leq \frac{1}{2},\\
O \left(  t^{\frac{1}{3}-\frac{1}{3}\sigma} \ln{t}  \right), & \frac{1}{2} < \sigma< 1,
\end{cases} 
\qquad  t\to \infty,
\end{equation*}
which gives \eqref{est1}.
\item If
\begin{equation*}
\sigma=\sigma(\ell)=1-\frac{\ell}{2L-2}, \quad L=2^{\ell -1}, \quad \ell\ge 3, \quad \ell \in \mathbb{N},
\end{equation*}
then for $\sigma =\sigma(\ell)\ge \frac{1}{2}$, we find
\begin{equation}  \label{B.5}
D(\sigma, t) = O \left(  t^{\frac{1}{2L-2}} \ln{t} \right), \qquad  t\to \infty.
\end{equation}

\item The PL principle allows the extension of the above result for the case of $\sigma \in (\sigma(\ell), \sigma(\ell+1))$ and $0\le\sigma\le \frac{1}{2}$.

\item Let $D_{\delta}$ be defined by
\begin{equation}  \label{B.6}
D_{\delta}(\sigma,t) = \sum\limits_{m=1}^{\left[ t^{\delta}\right]} \frac{1}{m^{\sigma}}e^{if(m)},
\end{equation}
where $\delta$ is a sufficiently small, positive constant.

By applying Theorem 5.14 of \cite{T}, for $k= \left[\frac{1}{\delta}\right] +1$, it can be shown that
\begin{equation}  \label{B.7}
D_{\delta}(\sigma,t) = O \left(  t^{(1-\sigma)\delta} \right), \qquad  t\to \infty.
\end{equation}
 However, we do not present the details of this proof here, since \eqref{B.7} can be obtained by the following simple estimate:
$$\left|\sum_{m=1}^{\left[ t^{\delta}\right]} \frac{1}{m^{\sigma}}e^{if(m)}\right|\leq \int_{1}^{t^{\delta}} \frac{1}{x^{\sigma}}dx=O \left(  t^{(1-\sigma)\delta} \right), \qquad  t\to \infty.$$

\end{enumerate}
\end{proof}

\begin{lemma}\label{l2.3}
Let $f(m)$ be defined by \eqref{sf2}. Then
\begin{equation}\label{est2}
\sum_{m=1}^{[t]} \dfrac{1}{m^\sigma} e^{  if(m) }=O(1), \quad \sigma\geq 0, \qquad  t\to \infty.
\end{equation}
\end{lemma}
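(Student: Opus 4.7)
The plan is to exploit the fact that $f'(x)=t/(t+x)\in (1/2,1)$ for $1\le x\le t$, so that the consecutive phase increments
\[
\Delta_m := f(m+1)-f(m)=t\ln\!\left(1+\frac{1}{t+m}\right),\qquad 1\le m\le [t],
\]
lie in a compact subinterval of $(0,2\pi)$ (indeed, a subinterval of $[1/2+o(1),\,1+o(1)]$ as $t\to\infty$). In particular, $|e^{i\Delta_m}-1|\ge c>0$ for some absolute constant $c$ and all large $t$. Moreover, since $f'$ is strictly decreasing, the sequence $\Delta_m$ is monotonically decreasing in $m$. These two features — a lower bound on $|e^{i\Delta_m}-1|$ together with monotone, bounded $\Delta_m$ — are exactly what is needed for a Kuzmin--Landau type argument to give an $O(1)$ estimate, independent of $t$.

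The first main step is to prove that the partial sums
\[
B_N := \sum_{m=1}^{N} e^{if(m)},\qquad 1\le N\le [t],
\]
are uniformly $O(1)$. I would use the identity $e^{if(m)}=u_m(w_{m+1}-w_m)$, where $w_m=e^{if(m)}$ and $u_m=1/(e^{i\Delta_m}-1)$, and then apply summation by parts to obtain
\[
B_N = u_N w_{N+1}-u_1 w_1 -\sum_{k=2}^{N}(u_k-u_{k-1})w_k.
\]
The boundary terms are $O(1)$ because $|u_m|\le 1/c$. For the sum, since
\[
|u_k-u_{k-1}| = \left|\frac{e^{i\Delta_{k-1}}-e^{i\Delta_k}}{(e^{i\Delta_k}-1)(e^{i\Delta_{k-1}}-1)}\right|\le \frac{|\Delta_k-\Delta_{k-1}|}{c^{2}},
\]
and since $\Delta_m$ is monotone,
\[
\sum_{k=2}^{N}|u_k-u_{k-1}|\le c^{-2}\sum_{k=2}^{N}|\Delta_k-\Delta_{k-1}| = c^{-2}(\Delta_1-\Delta_N)\le c^{-2}\Delta_1 = O(1).
\]
Hence $|B_N|=O(1)$ uniformly in $N\le [t]$.

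Once this uniform bound is established, a second Abel summation on
\[
\sum_{m=1}^{[t]}\frac{e^{if(m)}}{m^\sigma}
= \frac{B_{[t]}}{[t]^\sigma}+\sum_{m=1}^{[t]-1}\left(\frac{1}{m^\sigma}-\frac{1}{(m+1)^\sigma}\right)B_m
\]
finishes the proof: for $\sigma\ge 0$ the factor $m^{-\sigma}-(m+1)^{-\sigma}$ is nonnegative, so the right-hand side telescopes in absolute value to $O(1)\cdot 1=O(1)$, giving \eqref{est2}.

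The only step that requires care is verifying that $|e^{i\Delta_m}-1|$ is bounded below uniformly in $m$ and $t$; all the rest is routine. This follows once we check that $\Delta_m$ stays in a fixed compact subset of $(0,2\pi)$ for $1\le m\le[t]$ and all sufficiently large $t$, which in turn follows from $\Delta_m=f'(\xi_m)$ for some $\xi_m\in(m,m+1)$ together with $f'(x)=t/(t+x)\in(1/2,1)$ on $1\le x\le t$. No use of van der Corput type machinery or of Lemma~\ref{l2.1} is required here; the sharp $O(1)$ estimate comes entirely from the Kuzmin--Landau mechanism enabled by the first derivative of the phase being separated from integer multiples of $2\pi$.
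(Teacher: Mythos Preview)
Your proof is correct. Both you and the paper exploit the same underlying fact --- that $f'(x)=t/(t+x)$ is monotone with values in $[1/2,1)$ on $[1,t]$, so the phase increments stay in a compact subinterval of $(0,2\pi)$ --- and both finish by partial summation in $m^{-\sigma}$. The difference is in how the $\sigma=0$ bound is obtained: the paper quotes Lemma~4.8 of \cite{T} to write $\sum_{m\le t}e^{if(m)}=\int_1^t e^{if(x)}\,dx+O(1)$ and then evaluates the integral explicitly (the antiderivative is $(t+x)^{1+it}/(1+it)$, which is $O(1)$), whereas you reprove the Kuzmin--Landau inequality from scratch via Abel summation with the weights $u_m=1/(e^{i\Delta_m}-1)$. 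Your route is a bit more self-contained --- it avoids citing a lemma and evaluating the integral --- while the paper's route makes the boundedness visually explicit via the closed-form antiderivative. Both arguments are standard and of comparable length; neither is materially stronger than the other here.
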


\begin{proof}

We follow the steps of the analysis in \cite{T} and we observe that in this procedure the upper and lower bounds of the term $\big| f^{(k)}(x) \big|$ are independent of $x$. Indeed, we have  $$\big| f^{(k)}(x) \big|=(k-1)!\dfrac{t}{(x+t)^k},$$ and using that $0<x\leq t$ we get the conditions of  Theorem 5.13 in \cite{T}, i.e.$$\lambda_k\leq \big| f^{(k)}(x) \big| \leq h \lambda_k,$$ with $\lambda_k=\frac{(k-1)!}{2^k} t^{1-k} \ $ and $h=2^{k}$, for $k\geq 2$.

Thus, we may obtain the optimal estimates for the relevant sums, namely the sums $$\sum_{m=1}^{[t]} \frac{1}{m^{\sigma}}e^{if(m)}.$$

However, it is more efficient to use a different approach, based on Lemma 4.8 of \cite{T}. Indeed, it is straightforward to observe that $\displaystyle f'(x) =\dfrac{1}{1+\frac{x}{t}}$ is monotonic and also $f'(x)$ satisfies $\ \frac{1}{2} \leq \big| f'(x) \big|<1.$ Thus, the above Lemma yields 
$$\sum_{m=1}^{[t]} e^{  if(m) }=\int_{1}^{t} e^{  if(x) } dx + O(1), \qquad  t\to \infty.$$
The integral in the rhs of the above equation gives the contribution $$\dfrac{ (2t)^{1+i t}-(t+1)^{1+i t}}{1+i t}=-i \dfrac{2^{1+i t}-\left(1+\frac{1}{t}\right)^{1+i t}}{1-\frac{i}{t}}=O(1), \qquad  t\to \infty.$$
Therefore, the estimate \eqref{est2} holds for $\sigma=0$. 

The above analysis gives $$\sum_{m=a}^{b} e^{  if(m) } =  O(1), \qquad \text{for all } \ 1\leq a < b \leq t, \qquad  t\to \infty.$$
Hence, we apply the partial summation technique, with $m\geq 1$, i.e. $m^{-\sigma}\leq 1, \ \sigma>0,$ and we obtain \eqref{est2}.


\end{proof}

\section{Double zeta functions and Euler-Zagier double sums}

In this section we analyse the double zeta functions in the critical strip, namely the case that the real part of the exponents is in the interval $(0,1)$.

\subsection{Simple estimates for double exponential sums}

Letting $s=\sigma+it, \ \sigma\in(0,1)$, we estimate the double sums appearing of the form
\begin{equation}\label{dss1}
  \sum_{m=1}^{[t]}  \sum_{n=1}^{[t]}\frac{1}{m^s n^{\bar{s}}} .
\end{equation}

\begin{lemma}\label{l3.1}
The following estimate for \eqref{dss1} is valid:
\begin{equation}\label{dssest0}
  \sum_{m=1}^{[t]}  \sum_{n=1}^{[t]}\frac{1}{m^s n^{\bar{s}}}   =\begin{cases}
O \left(  t^{\frac{3}{2}-\frac{5}{3}\sigma} \ln{t} \right), & 0\leq\sigma\leq \frac{1}{2},\\
O \left(  t^{\frac{4}{3}-\frac{4}{3}\sigma}  \ln{t}  \right), & \frac{1}{2} < \sigma< 1,
\end{cases}
\qquad  t\to \infty.
\end{equation}  
\end{lemma}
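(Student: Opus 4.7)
The key observation is that the double sum in \eqref{dss1} completely decouples, since the summand factors as a function of $m$ alone times a function of $n$ alone:
\begin{equation*}
\sum_{m=1}^{[t]}\sum_{n=1}^{[t]} \frac{1}{m^{s} n^{\bar{s}}} \;=\; \left(\sum_{m=1}^{[t]}\frac{1}{m^{s}}\right)\left(\sum_{n=1}^{[t]}\frac{1}{n^{\bar{s}}}\right).
\end{equation*}
Moreover $\sum_{n} n^{-\bar{s}} = \overline{\sum_{n} n^{-s}}$, so both factors have the same modulus and either can be bounded via the Phragm\'en--Lindel\"of estimate recalled in \eqref{est0}.

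The plan is to bound \emph{only one} factor using the sharp estimate \eqref{est0}, and bound the other by the trivial absolute-value estimate. For the second factor I would write
\begin{equation*}
\Bigl|\sum_{n=1}^{[t]}\frac{1}{n^{\bar{s}}}\Bigr| \;\le\; \sum_{n=1}^{[t]}\frac{1}{n^{\sigma}} \;=\; O\bigl(t^{1-\sigma}\bigr), \qquad 0\le\sigma<1,\quad t\to\infty,
\end{equation*}
using the integral comparison $\sum_{n=1}^{[t]} n^{-\sigma} \le 1 + \int_{1}^{t} x^{-\sigma}\,dx$ for $0<\sigma<1$ and simply $[t]=O(t)$ for $\sigma=0$. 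For the first factor I would invoke \eqref{est0} directly, obtaining $O(t^{1/2-2\sigma/3}\ln t)$ on $0\le\sigma\le\tfrac12$ and $O(t^{1/3-\sigma/3}\ln t)$ on $\tfrac12<\sigma<1$.

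Multiplying the two estimates yields
\begin{equation*}
O\bigl(t^{1-\sigma}\cdot t^{1/2-2\sigma/3}\ln t\bigr) \;=\; O\bigl(t^{3/2-5\sigma/3}\ln t\bigr)
\quad\text{for } 0\le\sigma\le \tfrac{1}{2},
\end{equation*}
and
\begin{equation*}
O\bigl(t^{1-\sigma}\cdot t^{1/3-\sigma/3}\ln t\bigr) \;=\; O\bigl(t^{4/3-4\sigma/3}\ln t\bigr)
\quad\text{for } \tfrac{1}{2}<\sigma<1,
\end{equation*}
which is exactly \eqref{dssest0}.

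There is no real obstacle here: once the factorization is noted, the estimate reduces to an arithmetical combination of \eqref{est0} and a trivial bound. The only design choice is the asymmetric treatment of the two factors---applying \eqref{est0} to both would produce a smaller exponent but an extra logarithmic factor $(\ln t)^2$; the form written in the lemma, which is what is presumably needed later in the paper (the section is titled ``Simple estimates''), keeps a single $\ln t$ at the price of the weaker exponent recorded in \eqref{dssest0}.
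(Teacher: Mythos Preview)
Your proof is correct and is essentially the same as the paper's: the paper reaches the bound via the inequality
\[
\Bigl|\sum_{m}\sum_{n}\frac{1}{m^{s}n^{\bar s}}\Bigr|\le \sum_{m}\frac{1}{m^{\sigma}}\,\Bigl|\sum_{n}\frac{1}{n^{\bar s}}\Bigr|,
\]
which, since the inner sum is independent of $m$, is exactly your factorisation followed by the trivial bound on one factor and \eqref{est0} on the other. The only cosmetic difference is that the paper first records the cruder intermediate estimates $O(t^{2-2\sigma})$ and $O(t^{3/2-3\sigma/2}\ln t)$ before arriving at \eqref{dssest0}.
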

\begin{proof}

First, we will use the following ``crude" estimates:
\begin{equation}\label{dssest1}
  \Bigg| \sum_{m=1}^{[t]}  \sum_{n=1}^{[t]}\frac{1}{m^s n^{\bar{s}}}  \Bigg|  \leq \int_1^t \int_1^t\dfrac{1}{x^\sigma} \dfrac{1}{y^\sigma} dxdy =O\left(t^{2-2\sigma}\right), \qquad  t\to \infty.
\end{equation}  

By employing techniques developed in \cite{T} it is possible to improve the estimates of \eqref{dss1}. Observing that 
\begin{equation}\label{dss2}
  \Bigg| \sum_{m=1}^{[t]}  \sum_{n=1}^{[t]} \frac{1}{m^s n^{\bar{s}}}  \Bigg|   \leq  \sum_{m=1}^{[t]}   \Bigg| \sum_{n=1}^{[t]} \frac{1}{n^{\bar{s}}} \Bigg| \frac{1}{m^{\sigma}}, 
\end{equation}  
and using the rough estimate 
\begin{equation}
\sum_{n=1}^{[t]} \frac{1}{n^{\bar{s}}} = O\left(t^{\frac{1}{2}-\frac{1}{2}\sigma} \ln t\right), \qquad  t\to \infty,
\end{equation}
we can improve the estimates of \eqref{dssest1} as follows:
\begin{equation}\label{dssest2}
  \sum_{m=1}^{[t]}  \sum_{n=1}^{[t]}\frac{1}{m^s n^{\bar{s}}}   =O\left(t^{1-\sigma} t^{\frac{1}{2}-\frac{1}{2}\sigma} \ln t\right)=O\left( t^{\frac{3}{2}-\frac{3}{2}\sigma} \ln t\right), \qquad  t\to \infty.
\end{equation}  

Further improvement of \eqref{dssest1} is obtained by employing \eqref{est0}, thus
\begin{equation*}\label{dssest3}
  \sum_{m=1}^{[t]}  \sum_{n=1}^{[t]}\frac{1}{m^s n^{\bar{s}}}   =O\left(t^{1-\sigma}\right)\times\begin{cases}
O \left(  t^{\frac{1}{2}-\frac{2}{3}\sigma} \ln{t} \right), & 0\leq\sigma\leq \frac{1}{2},\\
O \left(  t^{\frac{1}{3}-\frac{1}{3}\sigma}  \ln{t}  \right), & \frac{1}{2} < \sigma< 1,
\end{cases}
\qquad  t\to \infty,
\end{equation*}  
which  yields \eqref{dssest0}.
\end{proof}

\begin{remark}
The above improvement of the estimates becomes clearer for $\sigma=\frac{1}{2}$, where using \eqref{dssest1}, \eqref{dssest2} and \eqref{dssest0}, we obtain as $  t\to \infty$ the estimates $O(t)$, $O\left(t^\frac{3}{4}\ln t \right)$ and $O\left(t^\frac{2}{3} \ln t \right)$, respectively.
\end{remark}

\subsection{Estimates of Euler-Zagier sums}

In what follows we first review the estimates of the Euler-Zagier double sums as they were obtained in \cite{KT}, where techniques from \cite{K} and \cite{T2} were extensively used. A special case of Theorem 1.1 in \cite{KT} reads as follows:
\begin{theorem*}[\textbf{1.1} in \textbf{\cite{KT}}]
Let $s_j=\sigma_j+i t$, with $0\leq \sigma_j<1, \ j=1,2$. Then the following estimates are valid as $ t\to \infty$:
\begin{equation}
\sum_{1\leq m<n} \dfrac{1}{m^{s_1}}\dfrac{1}{n^{s_2}}=\begin{cases} O\left( t^{1-\frac{2}{3}(\sigma_1+\sigma_2)} (\ln t)^2\right), & 0\leq \sigma_1\leq \frac{1}{2}, \quad 0\leq \sigma_2\leq \frac{1}{2}, \\
O\left( t^{\frac{5}{6}-\frac{1}{3}(\sigma_1+2\sigma_2)} (\ln t)^3\right), & \frac{1}{2}< \sigma_1< 1, \quad 0\leq \sigma_2\leq \frac{1}{2}, \\
O\left( t^{\frac{5}{6}-\frac{1}{3}(2\sigma_1+\sigma_2)} (\ln t)^3\right), & 0\leq \sigma_1\leq \frac{1}{2}, \quad \frac{1}{2} < \sigma_2 <1, \\
O\left( t^{\frac{2}{3}-\frac{1}{3}(\sigma_1+\sigma_2)} (\ln t)^4\right), & \frac{1}{2} < \sigma_1 <1, \quad \frac{1}{2} < \sigma_2 <1. \end{cases}
\end{equation}
\end{theorem*}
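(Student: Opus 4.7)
My plan is to reduce the Euler--Zagier double sum to iterated single--sum estimates by Abel (partial) summation in each variable, and then apply the single--sum bound \eqref{est0} together with the Phragm\'en--Lindel\"of convexity principle. Let
\[
\mu(\sigma) := \begin{cases} \tfrac{1}{2} - \tfrac{2\sigma}{3}, & 0\le\sigma\le\tfrac{1}{2},\\ \tfrac{1}{3} - \tfrac{\sigma}{3}, & \tfrac{1}{2} < \sigma < 1,\end{cases}
\]
denote the piecewise exponent appearing in \eqref{est0}. Writing $T(m) := \sum_{n > m} n^{-s_2}$ (truncated at $[t]$, or interpreted via the analytic continuation $\zeta(s_2) - \sum_{n \leq m} n^{-s_2}$), and applying \eqref{est0} to both $\sum_{n \leq [t]} n^{-s_2}$ and $\sum_{n \leq m} n^{-s_2}$, I obtain $|T(m)| = O(t^{\mu(\sigma_2)}\ln t)$ uniformly in $m\le t$. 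By exactly the same argument, the partial sums $B(M) := \sum_{m\le M} m^{-s_1}$ satisfy $|B(M)| = O(t^{\mu(\sigma_1)}\ln t)$ uniformly in $M\le t$.

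With these two uniform single--sum bounds in hand, Abel summation transforms the double sum $S$ into
\[
S \;=\; B(N)\,T(N) \;+\; \sum_{m=1}^{N-1} B(m)\,(m+1)^{-s_2}, \qquad N := [t].
\]
The boundary term has size $O\bigl(t^{\mu(\sigma_1)+\mu(\sigma_2)}(\ln t)^2\bigr)$, which matches the target exponent in each of the four cases since, as is easily checked, $\mu(\sigma_1)+\mu(\sigma_2)$ equals $1-\tfrac{2}{3}(\sigma_1+\sigma_2)$, $\tfrac{5}{6}-\tfrac{1}{3}(\sigma_1+2\sigma_2)$, $\tfrac{5}{6}-\tfrac{1}{3}(2\sigma_1+\sigma_2)$, and $\tfrac{2}{3}-\tfrac{1}{3}(\sigma_1+\sigma_2)$ respectively. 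For the residual sum $\sum_m B(m)(m+1)^{-s_2}$, I apply a second partial summation with the cumulative weight $C(M) := \sum_{m\le M}(m+1)^{-s_2}$, again bounded by $O(t^{\mu(\sigma_2)}\ln t)$ via \eqref{est0}; this produces another boundary term of the same order together with a sum $\sum_m C(m)(m+1)^{-s_1}$ that is handled analogously by exploiting the oscillation of $(m+1)^{-s_1}$ through \eqref{est0} applied to the outer variable.

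The precise log--power count reflects the number of Phragm\'en--Lindel\"of interpolations required to produce \eqref{est0} for a generic $\sigma\in(0,1)$. For $\sigma\in(0,\tfrac{1}{2})$, one interpolation between the endpoint estimates at $\sigma=0$ and $\sigma=\tfrac{1}{2}$ is required (contributing one factor of $\ln t$), while for $\sigma\in(\tfrac{1}{2},1)$ an additional interpolation between $\sigma=\tfrac{1}{2}$ and $\sigma=1$ contributes a further $\ln t$. Multiplying the contributions of the two variables yields the stated $(\ln t)^2$, $(\ln t)^3$, $(\ln t)^3$, $(\ln t)^4$ in the four cases.

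The main obstacle is that a naive replacement of the residual sum by absolute values gives $\sum_m |B(m)|\,(m+1)^{-\sigma_2} = O\bigl(t^{\mu(\sigma_1)+1-\sigma_2}\ln t\bigr)$, which is strictly larger than the target $O(t^{\mu(\sigma_1)+\mu(\sigma_2)}(\ln t)^{k})$. Recovering the sharp exponent therefore forces one to genuinely use the oscillation in $(m+1)^{-s_2}$, which is the role of the second partial summation above and, equivalently, of the refined van der Corput / approximate functional equation techniques of \cite{K,T2} as assembled in \cite{KT}. The other delicate point is book--keeping: one must verify that no further factors of $\ln t$ accumulate beyond those accounted for by the Phragm\'en--Lindel\"of interpolations, which is done by restricting interpolation to the exact regime of each $\sigma_j$ rather than interpolating across the whole strip.
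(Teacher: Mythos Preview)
This theorem is not proved in the paper; it is quoted as a special case of Theorem~1.1 of~\cite{KT}, with the remark that ``techniques from~\cite{K} and~\cite{T2} were extensively used''. So the relevant comparison is between your sketch and the two-dimensional exponential-sum machinery of~\cite{KT}.

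Your argument has a genuine gap at the residual step. After the first Abel summation you correctly reach
\[
R_1 \;=\; \sum_{m\le N-1} B(m)\,(m+1)^{-s_2}, \qquad B(m)=\sum_{k\le m}k^{-s_1},
\]
and you yourself note that taking absolute values here loses the exponent. Your second Abel summation then yields the boundary term $B(N-1)C(N-1)$ of the right size together with
\[
R_2 \;=\; \sum_{m\le N-2} C(m)\,(m+1)^{-s_1}, \qquad C(m)=\sum_{k\le m}(k+1)^{-s_2}.
\]
But $R_2$ has \emph{exactly the same shape} as $R_1$ with the roles of $s_1$ and $s_2$ interchanged. Saying $R_2$ is ``handled analogously'' is circular: a further Abel summation returns a sum of the shape of $R_1$ again, and the iteration never closes. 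The underlying reason is that partial summation gains nothing here, since the difference $B(m+1)-B(m)=(m+1)^{-s_1}$ has modulus $(m+1)^{-\sigma_1}$, which is not small relative to $|B(m)|$; Abel summation simply shuffles the oscillation between the two variables without extracting any additional cancellation.

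What \cite{KT} actually do --- and what the present paper illustrates for a cognate sum in Appendix~B --- is genuinely two-dimensional: one bounds $\sum_{(m,n)\in\Delta} e^{it(\ln m\pm\ln n)}$ over rectangles $\Delta$ via a two-variable second-derivative test (Theorem~2.16 of~\cite{K}, following~\cite{T2}), exploiting the Hessian of $t(\ln x\pm\ln y)$, and then patches the rectangles together with a double partial summation. This is not reducible to iterated applications of the one-variable bound~\eqref{est0}; your closing sentence effectively concedes this by deferring to ``the refined van der Corput\,/\,approximate functional equation techniques of~\cite{K,T2} as assembled in~\cite{KT}'', which is the citation rather than a proof. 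The accounting of the $(\ln t)^k$ via counted Phragm\'en--Lindel\"of interpolations is also not how those factors arise; they come from the dyadic decompositions in the two-dimensional argument, not from convexity.
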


As a corollary of the above we obtain the analogue of Corollary 1.2 in \cite{KT}, namely, as $t\to \infty$, we have the following:
\begin{align*}
&\sum_{1\leq m<n} \dfrac{1}{m^{it}}\dfrac{1}{n^{it}}=O\left( t (\ln t)^2\right), \\
&\sum_{1\leq m<n} \dfrac{1}{m^{it}}\dfrac{1}{n^{\frac{1}{2}+it}}=O\left( t^\frac{2}{3} (\ln t)^2\right), \\
&\sum_{1\leq m<n} \dfrac{1}{m^{\frac{1}{2}+it}}\dfrac{1}{n^{\frac{1}{2}+it}}=O\left( t^\frac{1}{3} (\ln t)^2\right).
\end{align*}
 
 The above results provide a `sharp' generalisation for double sums of the classical result of \cite{T}, as this is reviewed in  \eqref{est0}. In this sense, the above estimates improve significantly the analogous results of \cite{IM}.

\subsection{Relations between double exponential sums}

The results of subsections 3.1 and 3.2 suggest a connection between the double zeta function and the Euler-Zagier sums. Actually, the following exact relation between the Euler-Zagier sum and the leading asymptotic representation of $|\zeta|^2$ is valid:
\begin{align}\label{sevensix}
2\Re&\left\{ 
\sum_{m_{1}=1}^{[t]}\sum_{m_{2}=1}^{[t]}\frac{1}{m_{2}^{\bar{s}}(m_{1}+m_{2})^s } \right\} -
\left( \sum_{m=1}^{[t]}\frac{1}{m^{s}} \right) \left( \sum_{m=1}^{[t]}\frac{1}{m^{\bar{s} }} \right)\nonumber \\ 
&=-\sum_{m=1}^{[t]}\frac{1}{m^{2\sigma }}
+2\Re \left\{ \sum_{m=1}^{[t]}\sum_{n=[t]+1}^{[t]+m}\frac{1}{m^{\bar{s} }n^{s}} \right\}, 
\qquad s=\sigma + i t \in\mathbb{C}.
\end{align}

In order to establish this connection, we prove Lemma \ref{l7.1}. Indeed, equation (\ref{sevensix}) is a special case of Lemma \ref{l7.1}, and  follows from  (\ref{seventhree}) by  letting $u=s$, $v=\bar{s} $, $N=[t]$.

Furthermore, the rhs of \eqref{sevensix} can be estimated by using the results of section 2 and in particular Lemmas \ref{l2.1} and \ref{l2.2}. Thus \eqref{sevensix} takes the form
\begin{align}\label{sevensixa}
2\Re&\left\{ 
\sum_{m_{1}=1}^{[t]}\sum_{m_{2}=1}^{[t]}\frac{1}{m_{2}^{\bar{s}}(m_{1}+m_{2})^s } \right\} -
\left( \sum_{m=1}^{[t]}\frac{1}{m^{s}} \right) \left( \sum_{m=1}^{[t]}\frac{1}{m^{\bar{s} }} \right)\nonumber \\ 
&\hspace*{9mm}=\begin{cases}\dfrac{t^{1-2\sigma }}{1-2\sigma }+O \left(  t^{\frac{1}{2}-\frac{5}{3}\sigma} \ln{t} \right)+O(1), \quad &0<\sigma <\frac{1}{2}\\
\ln t+O(1), \quad &\sigma=\frac{1}{2}, \vspace*{4mm} \\
O(1), \quad &\frac{1}{2}<\sigma <1, \end{cases} \quad t\rightarrow \infty.
\end{align}
The details for this estimate are given in Lemma \ref{l7.2}.

\begin{lemma}\label{l7.1} 
Define the functions $f(u,v)$ and $g(u,v)$ by
\begin{equation}  \label{sevenone}
f(u,v)=\sum_{m_{1}=1}^{N}\sum_{m_{2}=1}^{N}\frac{1}{m_{1}^{u}}\frac{1}{(m_{1}+m_{2})^{v}},
\end{equation}
\begin{equation}  \label{seventwo}
g(u,v)=\sum_{m=1}^{N}\sum_{n=N+1}^{N+m}\frac{1}{m^{u}n^{v}},
\end{equation}
where $N$ is an arbitrary finite positive integer and $u\in\mathbb{C}$, $v\in\mathbb{C}$. These functions satisfy the identity 
\begin{equation}  \label{seventhree}
f(u,v)+f(v,u)+\sum_{m=1}^{N}\frac{1}{m^{u+v}}=
\left( \sum_{m=1}^{N}\frac{1}{m^{u}} \right)\left( \sum_{n=1}^{N}\frac{1}{n^{v}} \right) +g(u,v)+g(v,u).
\end{equation}
\end{lemma}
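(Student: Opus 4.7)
The plan is purely combinatorial, based on reindexing the inner sum of $f(u,v)$. The starting point is to make the change of summation variable $n = m_1 + m_2$ in the definition \eqref{sevenone}: since $m_1$ and $m_2$ each range over $\{1,\dots,N\}$, for each fixed $m_1 = m$ the new variable $n$ ranges over $\{m+1,\dots,m+N\}$. Hence
\begin{equation*}
f(u,v) = \sum_{m=1}^{N}\sum_{n=m+1}^{m+N}\frac{1}{m^{u}\,n^{v}}.
\end{equation*}

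Next, I would split the inner $n$-sum at the threshold $n = N$, writing $\sum_{n=m+1}^{m+N} = \sum_{n=m+1}^{N} + \sum_{n=N+1}^{m+N}$. The second block is exactly $g(u,v)$ as defined in \eqref{seventwo}, while the first block is the ``ordered'' double sum $\sum_{1\le m<n\le N} m^{-u}n^{-v}$. This gives the key identity
\begin{equation*}
f(u,v) = \sum_{1\le m<n\le N}\frac{1}{m^{u}n^{v}} + g(u,v),
\end{equation*}
and by the symmetry of the derivation,
\begin{equation*}
f(v,u) = \sum_{1\le m<n\le N}\frac{1}{m^{v}n^{u}} + g(v,u) = \sum_{1\le n<m\le N}\frac{1}{m^{u}n^{v}} + g(v,u),
\end{equation*}
after relabelling dummy indices in the first sum.

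Finally, I would add the two displayed equations and observe that the sum of $\sum_{m<n}$ and $\sum_{n<m}$ over $\{1,\dots,N\}^2$ equals the full product $\bigl(\sum_{m=1}^N m^{-u}\bigr)\bigl(\sum_{n=1}^N n^{-v}\bigr)$ minus the diagonal contribution $\sum_{m=1}^N m^{-(u+v)}$. Rearranging yields \eqref{seventhree}.

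There is no genuine analytical obstacle here; the only step that requires care is keeping the summation ranges straight under the substitution $n = m_1+m_2$ and making sure that the ``overflow'' part $n > N$ matches the definition of $g$ exactly (in particular that the upper limit $m+N$ of $n$ corresponds to the upper limit $N$ of $m_2$, and the lower limit $N+1$ corresponds to the part where $m_2 > N-m$). Once this bookkeeping is done, the identity is a one-line consequence of $\mathbf{1}_{m<n} + \mathbf{1}_{m>n} + \mathbf{1}_{m=n} = 1$ on $\{1,\dots,N\}^2$.
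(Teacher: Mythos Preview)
Your proof is correct and follows essentially the same approach as the paper: both proceed via the substitution $n=m_1+m_2$ in $f$, split the resulting $n$-range at $N$ to peel off $g$, and then combine the two ordered triangular sums with the diagonal to recover the product $\bigl(\sum m^{-u}\bigr)\bigl(\sum n^{-v}\bigr)$. Your write-up is in fact slightly cleaner in that you derive the single identity $f(u,v)=\sum_{m<n}m^{-u}n^{-v}+g(u,v)$ first and then invoke symmetry, whereas the paper carries $f(u,v)+f(v,u)$ through the computation from the start; but the substance is identical.
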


\begin{proof}
Letting $m_{1}=m$, $m_{1}+m_{2}=n$ in $f(u,v)$ and in $f(v,u)$, and then exchanging $m$ and $n$ in the expression of 
$f(v,u)$, we find the following:
\begin{align*}
f(u,v) &+f(v,u)=\left( \sum_{m=1}^{N}\sum_{n=m+1}^{m+N}+\sum_{n=1}^{N}\sum_{m=N+1}^{N+n} \right)\frac{1}{m^{u}n^{v}}  \\
&=\left( \sum_{m=1}^{N}\sum_{n=m+1}^{N}+\sum_{m=1}^{N}\sum_{n=N+1}^{N+m}
+\sum_{n=1}^{N}\sum_{m=n+1}^{N}+\sum_{n=1}^{N}\sum_{m=N+1}^{N+n}    \right)\frac{1}{m^{u}n^{v}}. 
\end{align*}
The second sum above equals $g(u,v)$, and by exchanging $m$ and $n$ in the last sum it follows that the latter sum equals $g(v,u)$. 
Thus, the above identity becomes
\begin{equation} \label{sevenfour}
f(u,v)+f(v,u)=\left( \sum_{m=1}^{N}\sum_{n=m+1}^{N}+\sum_{n=1}^{N}\sum_{m=n+1}^{N} \right)\frac{1}{m^{u}n^{v}}
+g(u,v)+g(v,u).
\end{equation}
But
\begin{equation}  \label{sevenfive}
\sum_{n=1}^{N}\sum_{m=n+1}^{N}\frac{1}{m^{u}n^{v}}=\sum_{m=1}^{N}\sum_{n=1}^{m-1}\frac{1}{m^{u}n^{v}}.
\end{equation}

Using the identity (\ref{sevenfive}) in (\ref{sevenfour}), adding to both sides of (\ref{sevenfour}) the term 
\begin{equation}
\sum_{m=1}^{N}\frac{1}{m^{u}m^{v}},  \nonumber
\end{equation}
and noting that
$$\left( \sum_{m=1}^{N}\sum_{n=m+1}^{N}+\sum_{m=1}^{N}\sum_{n=1}^{m-1}\right)  \frac{1}{m^{u}n^{v}}
+\sum_{m=1}^{N}\frac{1}{m^{u}{m}^{v}}=\sum_{m=1}^{N}\sum_{n=1}^{N}\frac{1}{m^{u}n^{v}}, $$   
equation (\ref{sevenfour}) becomes (\ref{seventhree}).

\end{proof}
%




In order to estimate the rhs of equation \eqref{sevensix}, we use the elementary estimate 
\begin{equation} \label{sevennine}
\sum_{m=1}^{[t]}\frac{1}{m^{2\sigma }}=\begin{cases}\ln t+O(1), \quad &\sigma=\frac{1}{2}, \vspace*{4mm} \\
\dfrac{t^{1-2\sigma }}{1-2\sigma }+O(1), \quad &0<\sigma <1, \quad \sigma \neq\frac{1}{2}, \end{cases} \quad t\rightarrow \infty,
\end{equation}
as well as the result below.


\begin{lemma} \label{l7.2}
The following estimates are valid:
\begin{align} \label{sevenseven}
 2\Re\left\{ \sum_{m=1}^{[t]}\sum_{n=[t]+1}^{[t]+m}\frac{1}{m^{\bar{s}}n^{s} }  \right\}  
= \begin{cases}
O \left(  t^{\frac{1}{2}-\frac{5}{3}\sigma} \ln{t} \right), & 0\leq\sigma\leq \frac{1}{2},\\
O \left(  t^{\frac{1}{3}-\frac{4}{3}\sigma} \ln{t}  \right), & \frac{1}{2} < \sigma< 1,
\end{cases} \qquad t\to \infty.
\end{align}
\end{lemma}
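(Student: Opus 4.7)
Write $S=\sum_{m=1}^{[t]}\sum_{n=[t]+1}^{[t]+m}m^{-\bar s}n^{-s}$; since $|2\Re\{S\}|\leq 2|S|$, it suffices to bound $|S|$. The plan is to apply Lemma~\ref{l2.1}(i) to the inner sum, split $S$ into a main term and an error term, and bound each by Abel summation, using the single-sum bounds of Lemma~\ref{l2.2} and of the classical estimate \eqref{est0}. I concentrate on the range $0\leq\sigma\leq 1/2$; the regime $1/2<\sigma<1$ is handled identically with the exponent $\tfrac{1}{3}-\tfrac{\sigma}{3}$ (from Lemma~\ref{l2.2}/\eqref{est0}) replacing $\tfrac{1}{2}-\tfrac{2\sigma}{3}$ throughout, which ultimately yields $O(t^{1/3-4\sigma/3}\ln t)$ in place of $O(t^{1/2-5\sigma/3}\ln t)$.

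Choosing $\eta=2\pi([t]+m)>2\pi t$, Lemma~\ref{l2.1}(i) yields uniformly in $1\leq m\leq [t]$
\[
\sum_{n=[t]+1}^{[t]+m}\frac{1}{n^{s}}=\frac{([t]+m)^{1-s}}{1-s}+E_{m},\qquad |E_{m}|=O(t^{-\sigma}),
\]
so that $S=S_{1}+S_{2}$ with $S_{1}=(1-s)^{-1}\sum_{m}([t]+m)^{1-s}/m^{\bar s}$ and $S_{2}=\sum_{m}E_{m}/m^{\bar s}$. For $S_{1}$ the key factorisation is
\[
\frac{([t]+m)^{1-s}}{m^{\bar s}}=\frac{([t]+m)^{1-\sigma}}{m^{\sigma}}\,e^{-it\ln(1+[t]/m)},
\]
whose oscillating factor is (up to the harmless replacement of $t$ by $[t]$ inside the logarithm) the complex conjugate of the exponential sum estimated in Lemma~\ref{l2.2}. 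The partial sums $A_{M}=\sum_{k=1}^{M}e^{-it\ln(1+[t]/k)}/k^{\sigma}$ therefore satisfy $|A_{M}|=O(t^{1/2-2\sigma/3}\ln t)$ uniformly in $M\leq[t]$, while the weight $b_{m}=([t]+m)^{1-\sigma}$ is monotone with $b_{[t]}=O(t^{1-\sigma})$ and $\sum_{m}|b_{m+1}-b_{m}|=O(t^{1-\sigma})$. Abel summation then gives $|\sum_{m}a_{m}b_{m}|=O(t^{3/2-5\sigma/3}\ln t)$, and division by $|1-s|=O(t)$ produces $|S_{1}|=O(t^{1/2-5\sigma/3}\ln t)$, which is the desired bound.

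The main obstacle is $S_{2}$: the naive estimate $|S_{2}|\leq\max_{m}|E_{m}|\cdot\sum_{m}m^{-\sigma}=O(t^{1-2\sigma})$ is too weak. The remedy is to refine the inner-sum asymptotic by Euler--Maclaurin into a constant-in-$m$ piece plus an oscillatory piece of the same order,
\[
E_{m}=C_{0}+\tfrac{1}{2}([t]+m)^{-s}+R_{m},\qquad C_{0}=-\frac{([t]+1)^{1-s}}{1-s}+\tfrac{1}{2}([t]+1)^{-s},\quad |C_{0}|=O(t^{-\sigma}),
\]
with $R_{m}$ a remainder that, after interchange of summation and integration, is controlled by the same oscillatory arguments used below. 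The constant piece is paired with the \emph{oscillating} single sum $\sum_{m}m^{-\bar s}=O(t^{1/2-2\sigma/3}\ln t)$ (by \eqref{est0}), giving $|C_{0}\sum_{m}m^{-\bar s}|=O(t^{1/2-5\sigma/3}\ln t)$. The oscillatory piece $\sum_{m}m^{-\bar s}([t]+m)^{-s}$ has precisely the structure of $S_{1}$ but with the slowly varying weight $([t]+m)^{-\sigma}$ (total variation $O(t^{-\sigma})$) in place of $([t]+m)^{1-\sigma}$; Abel summation against the same $a_{m}=e^{-it\ln(1+[t]/m)}/m^{\sigma}$ as above yields once more $O(t^{1/2-5\sigma/3}\ln t)$. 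The essential point behind the whole strategy—and the step that absorbs most of the subtlety of the proof—is that the constant-in-$m$ part of the error has to be paired with the oscillating sum $\sum m^{-\bar s}$ and not with the monotone $\sum m^{-\sigma}$, since only this pairing gives the crucial saving from $O(t^{1-2\sigma})$ down to the optimal $O(t^{1/2-5\sigma/3}\ln t)$.
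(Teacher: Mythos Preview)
Your overall plan is the paper's own: apply Lemma~\ref{l2.1}(i) to the inner sum, bound the resulting main piece through the exponential-sum estimate of Lemma~\ref{l2.2} (the paper invokes ``the analysis in the proof of Lemma~\ref{l2.2}''; you carry out the equivalent Abel summation explicitly), and control the $O(t^{-\sigma})$ error by pairing it against the oscillating sum $\sum_m m^{-\bar s}$ via~\eqref{est0}. The paper in fact writes the error contribution simply as $O(t^{-\sigma})\sum_{m} m^{-\bar s}$, treating the $O(t^{-\sigma})$ as if it were $m$-independent; you are right to flag that the error from Lemma~\ref{l2.1}(i) depends on $m$, and your Euler--Maclaurin splitting into a genuinely constant piece $C_0$ plus explicit oscillatory pieces is exactly what is needed to make that pairing honest.

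The one place your sketch is thin is the remainder $R_m$. The first-order Euler--Maclaurin remainder is
\[
R_m=s\int_{[t]+1}^{[t]+m}\bigl(\{x\}-\tfrac12\bigr)\,x^{-s-1}\,dx;
\]
interchanging with the $m$-sum gives $s\int_{[t]+1}^{2[t]}\bigl(\{x\}-\tfrac12\bigr)x^{-s-1}T(x)\,dx$ with $T(x)=\sum_{x-[t]\le m\le[t]}m^{-\bar s}=O(t^{1/2-2\sigma/3}\ln t)$, but the crude bound $|s\,x^{-s-1}|=O(t^{-\sigma})$ over an interval of length $t$ then yields only $O(t^{3/2-5\sigma/3}\ln t)$ --- a full power of $t$ too large, because the prefactor $|s|\approx t$ eats the saving you extracted from~\eqref{est0}. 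To close this you must push the expansion further: each successive boundary term has the shape $c_k([t]+m)^{-s-k}$ (the periodic Bernoulli function evaluated at an integer is a constant) and is handled by the same Abel argument you already used for $\tfrac12([t]+m)^{-s}$, while the integral remainder after $K$ steps carries a factor $\bigl(|s|/2\pi N\bigr)^{K}$ with $N\ge[t]+1$, hence $|s|/(2\pi N)<1/(2\pi)$, so the tail can be made negligible.
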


\begin{proof} 

In order to simplify the double sum appearing in the lhs of equation (\ref{sevenseven}) we use relation \eqref{FL-eq}, taking $\eta=2\pi(t+m)$, equivalently $\left[\frac{\eta}{2\pi}\right] =[t]+m$:
\begin{align*}
\sum_{n=[t]+1}^{[t]+m}\frac{1}{n^{s}}&=\frac{1}{1-s}(t+m)^{1-s}+O\left( \frac{1}{t^{\sigma }} \right)  \\
&=i \frac{1}{1+\frac{i(1-\sigma )}{t}}\frac{1}{t^{s}m^{s-1}}\left( \frac{1}{t}+\frac{1}{m} \right)^{1-s} +O\left( \frac{1}{t^{\sigma }} \right) . 
\end{align*}

Replacing in the lhs of \eqref{sevenseven} the sum over $n$ by the above sum we find 
\begin{align}\label{seventhirteen}
2\Re\left\{ \sum_{m=1}^{[t]}\sum_{n=[t]+1}^{[t]+m}\frac{1}{m^{\bar{s} }n^{s}} \right\}=
-2\Im\Bigg\{
\frac{1}{t^{s}}&\sum_{m=1}^{[t]}\frac{1}{m^{2\sigma -1}}\left( \frac{1}{t}+\frac{1}{m}  \right)^{1-s}
\left(1+O\left( \frac{1}{t} \right)\right)   \nonumber\\
&+O\left( \frac{1}{t^{\sigma }} \right)\sum_{m=1}^{[t]}\frac{1}{m^{\bar{s} }}\Bigg\} , \hphantom{3a} t\rightarrow \infty .
\end{align}

The first single sum in the rhs of \eqref{seventhirteen}  involves the function $f(m)$ defined in \eqref{sf1}. Moreover, since $1\leq m\leq t$ and $0<\sigma<1$ we find
$$\frac{1}{m^{2\sigma -1}}\left( \frac{1}{t}+\frac{1}{m}  \right)^{1-\sigma} \leq \frac{1}{m^{2\sigma -1}}\left(\frac{2}{m}  \right)^{1-\sigma}<\frac{2}{m^\sigma}.$$
Thus, the analysis  in the proof of Lemma \ref{l2.2} yields the estimate
$$\sum_{m=1}^{[t]}\frac{1}{m^{2\sigma -1}}\left( \frac{1}{t}+\frac{1}{m}  \right)^{1-s}= \begin{cases}
O \left(  t^{\frac{1}{2}-\frac{2}{3}\sigma} \ln{t} \right), & 0\leq\sigma\leq \frac{1}{2},\\
O \left(  t^{\frac{1}{3}-\frac{1}{3}\sigma} \ln{t}  \right), & \frac{1}{2} < \sigma< 1,
\end{cases} \qquad t\to \infty.$$

For the second single sum in the rhs of \eqref{seventhirteen}  we use the classical estimate \eqref{est0}

Applying the above estimates of the two single sums in \eqref{seventhirteen} yields  \eqref{sevenseven}.

\end{proof}



\section{Further estimates for double exponential sums}

In this section we analyse two of the most well-known types of double exponential sums, namely the  Euler-Zagier and the Mordell-Tornheim sums. In this section we do not restrict the real parts of the exponents in the interval $(0,1)$.

\subsection{Special cases of Euler-Zagier with different exponents}

\begin{lemma}\label{lemma4.1}
Let $S_A$ denote double sum
\begin{equation}\label{sixTsup0}
S_A=\sum_{m_1=1}^{[t]}\sum_{m_2=1}^{[t]} \dfrac{1}{\left(m_1+m_2\right)^{\sigma_1+i t}}\dfrac{1}{m_2^{\sigma_2-i t}},
\end{equation}
with $\sigma_1<0$ and $\sigma_2>1$. Then,
\begin{equation}\label{sixTsup3}
\big|S_A\big|=O\left(t^{\frac{1}{2}-\sigma_1} \ln t \right), \qquad  t\to \infty.
\end{equation}
\end{lemma}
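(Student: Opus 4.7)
The key observation is that $\sigma_2>1$ makes the $m_2$-sum absolutely convergent uniformly in $m_1$, so the only delicate piece of $S_A$ is the cancellation hidden in the $m_1$-sum. Accordingly, the plan is to pull the absolute value inside the outer variable and write
\[
|S_A|\le \sum_{m_2=1}^{[t]}\frac{1}{m_2^{\sigma_2}}\,|V(m_2)|,\qquad V(m_2):=\sum_{m_1=1}^{[t]}(m_1+m_2)^{-\sigma_1-it},
\]
reducing the proof to the uniform estimate $|V(m_2)|=O\!\left(t^{\frac12-\sigma_1}\ln t\right)$ for $1\le m_2\le[t]$. Since $\sum_{m_2\ge 1} m_2^{-\sigma_2}=O(1)$ by the assumption $\sigma_2>1$, this would immediately yield \eqref{sixTsup3}.

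To bound $V(m_2)$, I would set $n=m_1+m_2$ and separate modulus from oscillation,
\[
V(m_2)=\sum_{n=m_2+1}^{m_2+[t]} n^{-\sigma_1}\,e^{-it\ln n},
\]
then apply Abel summation with the positive weights $n^{-\sigma_1}$ against the partial sums $S(N):=\sum_{n=1}^{N} n^{-it}$. The ingredient required is the uniform Titchmarsh-type bound $|S(N)|=O(t^{1/2}\ln t)$ valid for every $N\le 2[t]$: for $N\le[t]$ this is the $\sigma=0$ case of \eqref{est0}, while for $[t]<N\le 2[t]$ it follows by splitting at $[t]$ and applying Lemma \ref{l2.1}(i) with $\eta=2\pi N$, whose main term $N^{1-it}/(1-it)$ has modulus $\asymp N/t=O(1)$ and whose error is $O(1)$ at $\sigma=0$.

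Plugging this into the Abel decomposition, the boundary contribution is $(m_2+[t])^{-\sigma_1}\cdot O(t^{1/2}\ln t)=O(t^{1/2-\sigma_1}\ln t)$, and the sum over differences is controlled by
\[
O(t^{1/2}\ln t)\cdot |\sigma_1|\sum_{n=m_2+1}^{m_2+[t]-1} n^{-\sigma_1-1}=O(t^{1/2}\ln t)\cdot O(t^{-\sigma_1})=O(t^{1/2-\sigma_1}\ln t),
\]
where the estimate $\sum_{n\le 2[t]} n^{-\sigma_1-1}\ll t^{-\sigma_1}$ holds uniformly for $\sigma_1<0$ (the integral $\int^{t} x^{-\sigma_1-1}\,dx$ has size $t^{-\sigma_1}$ in all three sub-cases $\sigma_1\gtrless -1$). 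Combining these two pieces gives the desired uniform bound on $V(m_2)$, and multiplication by the convergent outer sum delivers \eqref{sixTsup3}.

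The principal obstacle is really a bookkeeping one: the classical estimate \eqref{est0} is stated only for $0\le\sigma<1$ with an upper limit equal to $[t]$, whereas here the inner sum ranges up to $2[t]$. Bridging this gap is precisely the role of Lemma \ref{l2.1}(i), which keeps the argument entirely within the framework built in Section 2 and avoids invoking sharper van der Corput machinery that would be overkill for the exponent sought.
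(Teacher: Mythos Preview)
Your proof is correct and follows essentially the same route as the paper's: rename $n=m_1+m_2$, bound the outer $m_2$-sum by absolute convergence (since $\sigma_2>1$), and control the inner sum via Abel/partial summation against the $\sigma=0$ case of \eqref{est0}. Your write-up is actually more careful than the paper's on one point---the partial sums $S(N)=\sum_{n\le N}n^{-it}$ must be bounded for $N$ up to $2[t]$, not just $[t]$, and you explicitly bridge that gap with Lemma~\ref{l2.1}(i), whereas the paper simply asserts \eqref{sixTsup2b} after invoking partial summation.
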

\begin{proof}
Letting $m_2=m, \ m_1+m_2=n,$ and employing the triangular inequality we find
\begin{equation}\label{sixTsup1}
\big|S_A\big|=\Bigg|\sum_{m=1}^{[t]}\sum_{n=m+1}^{m+[t]} \dfrac{1}{n^{\sigma_1+i t}}\dfrac{1}{m^{\sigma_2-i t}}\Bigg| \leq \sum_{m=1}^{[t]} \Bigg| \sum_{n=m+1}^{m+[t]} \dfrac{1}{n^{\sigma_1+i t}}\Bigg| \dfrac{1}{m^{\sigma_2}}.
\end{equation}
Taking into consideration \eqref{est0} with $\sigma_1=0$, we find
\begin{equation*}\label{sixTsup2a}
\sum_{n=1}^{[t]} \dfrac{1}{n^{i t}} = O\left(t^{\frac{1}{2}} \ln t\right), \qquad  t\to \infty.
\end{equation*}
Applying partial summation we obtain the estimate
\begin{equation}\label{sixTsup2b}
\sum_{n=m+1}^{m+[t]} \dfrac{1}{n^{\sigma_1+i t}}=O\left(t^{\frac{1}{2}-\sigma_1} \ln t\right), \qquad  t\to \infty,
\end{equation}
for $\sigma_1<0$ and $1\leq m \leq [t].$

Indeed, using \eqref{sixTsup2b} into \eqref{sixTsup1} and noting that $\sigma_2>1$, we find \eqref{sixTsup3}

\end{proof}

\begin{remark}
An alternative proof of \eqref{sixTsup2b} can be derived by using the estimate 
\begin{equation}\label{sixrem7}
\sum_{m=1}^{[t]} \dfrac{1}{m^{\sigma-1+it}} = O\left(t^{\frac{3}{2}-\sigma}\right), \qquad 0<\sigma<1,\quad t\to\infty,
\end{equation}
for $\sigma_1=\sigma-1<0$.

The proof of \eqref{sixrem7} is provided in the Appendix A.

\end{remark}

%
%

\subsection{Special cases of Mordell-Tornheim sums}

\begin{lemma}\label{lemma4.2}
Let $S_B$ denote  double sum
\begin{equation}\label{sixTsup4}
S_B=\sum_{m_1=1}^{[t]}\sum_{m_2=1}^{[t]} \dfrac{1}{ \left( m_1+m_2 \right)^{\sigma_1+i t}} \dfrac{1}{m_2^{\sigma_2-i t}} \dfrac{1}{m_1^{\sigma_3}},
\end{equation}
with $\sigma_1<0, \ \sigma_2\in (0,1)$ and $\sigma_3\geq 1$.
Then, 
\begin{equation}\label{sixTsup15}
\big|S_B\big| =  \begin{cases} O \left(t^{1-\sigma_1-\sigma_2}\ln t\right),  & 0 < \sigma_2 <\frac{1}{2}, \quad \sigma_3 =1  , \\ O \left(t^{1-\sigma_1-\sigma_2}\right),  & 0 < \sigma_2 <\frac{1}{2}, \quad \sigma_3 >1  ,\\ O\left(t^{\frac{1}{2}-\sigma_1}\ln t\right), & \frac{1}{2} \leq  \sigma_2 <1, \quad \sigma_3 \geq 1, \end{cases} \quad t \to \infty.
\end{equation}
\end{lemma}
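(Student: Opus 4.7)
The strategy is to exploit the sign condition $\sigma_1<0$ via a crude pointwise bound on $1/(m_1+m_2)^{\sigma_1+it}$, letting the condition $\sigma_3\ge 1$ control the $m_1$-sum. Unlike the situation for $S_A$ in Lemma \ref{lemma4.1}, where extraction of the cancellation from the factor $m_2^{-it}$ is unavoidable (a crude bound there loses a factor of $t^{1/2}$), here the extra weight $m_1^{-\sigma_3}$ restricts the $m_1$-sum to at most logarithmic growth, so the simple pointwise estimate on $(m_1+m_2)^{-\sigma_1}$ is already enough to reproduce \eqref{sixTsup15} in all three cases.

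The steps are as follows. First, since $1\le m_1,m_2\le[t]$ and $\sigma_1<0$, we have $(m_1+m_2)^{-\sigma_1}\le(2t)^{-\sigma_1}$; combining this with the triangle inequality yields
\begin{equation*}
|S_B|\le(2t)^{-\sigma_1}\Bigl(\sum_{m_1=1}^{[t]}\frac{1}{m_1^{\sigma_3}}\Bigr)\Bigl(\sum_{m_2=1}^{[t]}\frac{1}{m_2^{\sigma_2}}\Bigr).
\end{equation*}
Second, the two single sums are handled by elementary estimates: $\sum_{m_1=1}^{[t]}m_1^{-\sigma_3}=O(\ln t)$ for $\sigma_3=1$ and $O(1)$ for $\sigma_3>1$, while for $0<\sigma_2<1$ we have $\sum_{m_2=1}^{[t]}m_2^{-\sigma_2}=O(t^{1-\sigma_2})$ (compare with \eqref{sevennine}). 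Third, for $0<\sigma_2<1/2$ inserting these estimates gives $O(t^{1-\sigma_1-\sigma_2}\ln t)$ when $\sigma_3=1$ and $O(t^{1-\sigma_1-\sigma_2})$ when $\sigma_3>1$, which match the first two cases of \eqref{sixTsup15}. Fourth, for $1/2\le\sigma_2<1$ the monotonicity $t^{1-\sigma_2}\le t^{1/2}$ absorbs the $m_2$-sum into $t^{1/2}$, so the bound collapses uniformly to $O(t^{1/2-\sigma_1}\ln t)$ for all $\sigma_3\ge 1$, matching the third case.

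There is no serious obstacle here; the only point worth emphasising is that in the third case the stated bound $O(t^{1/2-\sigma_1}\ln t)$ is a deliberately loose unified bound that happens to be easiest to combine with later estimates. For $\sigma_2>1/2$ the crude calculation actually yields the sharper $O(t^{1-\sigma_1-\sigma_2}[\ln t])$, but this sharper form is not needed. It is also worth noting that one should resist the temptation to emulate the refined partial-summation strategy used for $S_A$: applying triangle inequality in $m_2$ first and then estimating $|\sum_{m_1=1}^{[t]}(m_1+m_2)^{-\sigma_1-it}m_1^{-\sigma_3}|$ via partial summation would give $O(t^{1/2-\sigma_1}\ln t)$ for the inner sum times $O(t^{1-\sigma_2})$ from the outer, yielding $O(t^{3/2-\sigma_1-\sigma_2}\ln t)$, which is strictly weaker than the crude bound throughout the stated range.
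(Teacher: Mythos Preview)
Your argument is correct and complete: the global bound $(m_1+m_2)^{-\sigma_1}\le(2t)^{-\sigma_1}$ together with the elementary single-sum estimates reproduces all three cases of \eqref{sixTsup15} exactly as you describe.

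Your route, however, differs from the paper's. The paper splits $S_B=S_1+S_2$ according to whether $m_2\le m_1$ or $m_2>m_1$. For $S_2$ it uses essentially your crude bound, but localised: on $m_1<m_2$ one has $(m_1+x)^{-\sigma_1}<(2x)^{-\sigma_1}$, and integrating in $x$ gives $O(t^{1-\sigma_1-\sigma_2})$ times the $m_1^{-\sigma_3}$ sum. For $S_1$ the paper instead changes order, uses partial summation against $m_2/m_1^{\sigma_3}\le 1$, and invokes the exponential-sum estimate \eqref{sixTsup2b} from Lemma~\ref{lemma4.1} to obtain $S_1=O(t^{1/2-\sigma_1}\ln t)$. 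The final bound is then the maximum of the two pieces, which reproduces \eqref{sixTsup15}.

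The trade-off is this: your approach is strictly more elementary and avoids any appeal to cancellation or to Lemma~\ref{lemma4.1}; the paper's treatment of $S_1$ exhibits the link back to the $S_A$ machinery but, as the paper's own Remark following the lemma concedes, gains nothing in the end --- applying the crude estimate to $S_1$ as well recovers the same result. Your closing observation that running the partial-summation route in the other order would \emph{lose} a factor $t^{1/2}$ is also correct and is a useful cautionary note.
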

\begin{proof}
Splitting this sum into two sums, depending on whether $m_1/m_2>1$ or $m_1/m_2<1,$ we find
\begin{equation}\label{sixTsup5}
S_B=S_1+S_2,
\end{equation}
where
\begin{equation}\label{sixTsup6}
S_1=\sum_{m_1=1}^{[t]}\sum_{m_2=1}^{m_1} \dfrac{1}{\left(m_1+m_2\right)^{\sigma_1+i t}}\dfrac{1}{m_2^{\sigma_2-i t}}\dfrac{1}{m_1^{\sigma_3}},
\end{equation}
and
\begin{equation}\label{sixTsup7}
S_2=\sum_{m_1=1}^{[t]}\sum_{m_2=m_1+1}^{[t]} \dfrac{1}{\left(m_1+m_2\right)^{\sigma_1+i t}}\dfrac{1}{m_2^{\sigma_2-i t}}\dfrac{1}{m_1^{\sigma_3}}.
\end{equation}
In order to estimate the sum $S_1$, we change the order of summation, see figure \ref{Fig-s}.
\begin{figure}
\begin{center}
\includegraphics[scale=0.25]{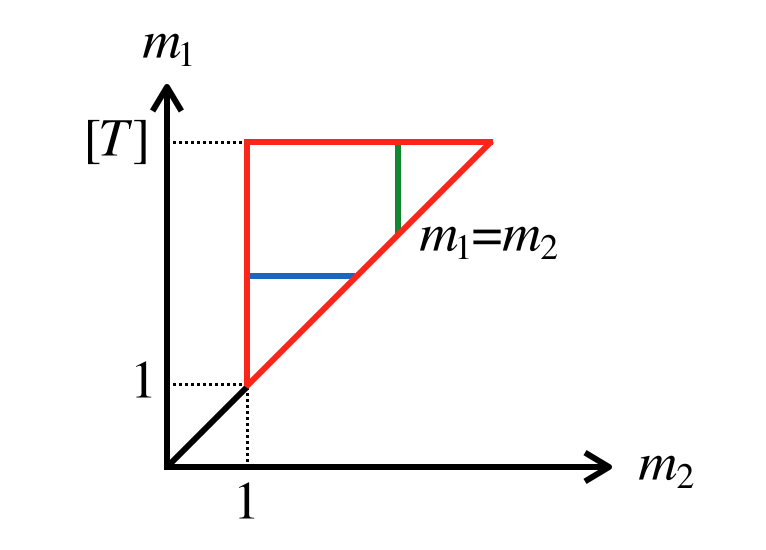}
\end{center}
\caption{Change of the order of summation.}
\label{Fig-s}
\end{figure}

Thus,
\begin{equation*}
S_1=\sum_{m_2=1}^{[t]}\sum_{m_1=m_2}^{[t]} \dfrac{1}{\left(m_1+m_2\right)^{\sigma_1+i t}}\dfrac{1}{m_2^{\sigma_2-i t}}\dfrac{1}{m_1^{\sigma_3}},
\end{equation*}
or
\begin{equation}\label{sixTsup8}
S_1=\sum_{m_2=1}^{[t]}\sum_{m_1=m_2}^{[t]} \dfrac{1}{\left(m_1+m_2\right)^{\sigma_1+i t}}\dfrac{1}{m_2^{\sigma_2+1-i t}}\dfrac{m_2}{m_1^{\sigma_3}}.
\end{equation}
Using partial summation and the fact that $\frac{m_2}{m_1^{\sigma_3}}\leq 1,$ it follows that
\begin{equation}\label{sixTsup9}
S_1=O\left(| \tilde{S}_1 | \right), \qquad  t\to \infty,
\end{equation}
where
\begin{equation}\label{sixTsup10}
\tilde{S}_1=\sum_{m_2=1}^{[t]}\sum_{m_1=m_2}^{[t]} \dfrac{1}{\left(m_1+m_2\right)^{\sigma_1+i t}}\dfrac{1}{m_2^{\sigma_2+1-i t}}.
\end{equation}
Then, proceeding as with the sum $S_A$ in \eqref{sixTsup1}, we obtain the estimate \eqref{sixTsup3}, i.e.,
\begin{equation}\label{sixTsup11}
S_1=O\left(t^{\frac{1}{2}-\sigma_1}\ln t\right), \qquad  t\to \infty.
\end{equation}
In order to estimate $S_2$, we first note that
\begin{equation}\label{sixTsup12}
\big|S_2\big|\leq \sum_{m_1=1}^{[t]} \Bigg| \sum_{m_2=m_1+1}^{[t]} \dfrac{1}{\left(m_1+m_2\right)^{\sigma_1+i t}}\dfrac{1}{m_2^{\sigma_2-i t}}\Bigg| \dfrac{1}{m_1^{\sigma_3}}.
\end{equation}
Then, taking into consideration that $m_1<m_2$, we can use the following ``crude" estimate for the $m_2$ sum:
\begin{equation}\label{sixTsup13}
\Bigg| \sum_{m_2=m_1+1}^{[t]} \dfrac{1}{\left(m_1+m_2\right)^{\sigma_1+i t}}\dfrac{1}{m_2^{\sigma_2-i t}}\Bigg| \leq \int_{m_1+1}^t \dfrac{1}{\left(m_1+x\right)^{\sigma_1}}\dfrac{1}{x^{\sigma_2}} dx:=J\left(m_1,t\right).
\end{equation}
But, $$m_1<x, \ \text{ or } \quad m_1+x<2x,  \ \text{ or } \quad \left(m_1+x\right)^{-\sigma_1} <(2x)^{-\sigma_1}.$$
Thus, $$J\left(m_1,t\right)< \int_{m_1+1}^t 2^{-\sigma_1} x^{-\sigma_1-\sigma_2} dx= O \left(t^{1-\sigma_1-\sigma_2}\right) + O \left(m_1^{1-\sigma_1-\sigma_2}\right)= O \left(t^{1-\sigma_1-\sigma_2}\right), \quad  t\to \infty,$$
since $\sigma_1+\sigma_2<1$.
Hence, equations \eqref{sixTsup12} and \eqref{sixTsup13} yield
\begin{equation}\label{sixTsup14}
\big|S_2\big| = O \left(t^{1-\sigma_1-\sigma_2}\int_{1}^t \frac{dx}{x^{\sigma_3}}\right)= O \left(t^{1-\sigma_1-\sigma_2}\right) \times  \begin{cases} O(\ln t) , & \sigma_3 =1,  \\ O (1), & \sigma_3 >1,  \end{cases} \qquad  t\to \infty.
\end{equation}
\end{proof}

\begin{remark}
One can apply the estimate used in \eqref{sixTsup14} to  $S_1$, and then the estimates \eqref{sixTsup11} and \eqref{sixTsup15} should be substituted by \eqref{sixTsup14}. Furthermore, for the special cases $\sigma_1=\sigma-1, \ \sigma_2=\sigma$ and $\sigma_3=1$, with $\sigma\in (0,1)$,  the estimates \eqref{sixTsup3} and \eqref{sixTsup15} take the form
\begin{equation}\label{rem4.2-1}
\big|S_A\big|=O\left(t^{\frac{3}{2}-\sigma} \ln t \right), \qquad  t\to \infty,
\end{equation}
and
\begin{equation}\label{rem4.2-2}
\big|S_B\big| =  \begin{cases} O \left(t^{2-2\sigma}\ln t\right),  & 0 < \sigma <\frac{1}{2},
\\ O\left(t^{\frac{3}{2}-\sigma}\ln t\right), & \frac{1}{2} \leq  \sigma <1, \end{cases} \qquad  t\to \infty,
\end{equation}
respectively.
\end{remark}

\section{Double sums for ``small'' sets of summation}

The analysis presented in \cite{Fs} requires estimating the following sum:
\begin{equation}\label{seven_n_1}
\mathop{\sum\sum}_{(m_1,m_2) \in M}\frac{1}{m_{1}^{s}(m_{1}+m_{2})^{\bar{s} }},
\end{equation}
where $M$ is defined by 
\begin{align}  \label{sevenforteen}
M = \Bigg\{ m_{1}\in\mathbb{N}^{+}, \   m_{2}\in  \mathbb{N}^{+}, & \quad 1\leq m_{1}\leq [t], \quad 1\leq m_{2}\leq [t],  \notag \\
& \frac{1}{t^{1-\delta _{2}}-1}<\frac{m_{2}}{m_{1}}<t^{1-\delta _{3}}-1, \quad  t>0\Bigg\} ,
\end{align}
with $\delta _{2}$ and $\delta _{3}$ positive constants.

The above sum can be related to the sum appearing in the first term of the lhs of \eqref{sevensix} via the following identity:
\begin{equation}\label{sevenfifteen}
\sum_{m_{1}=1}^{[t]}\sum_{m_{2}=1}^{[t]}\frac{1}{m_{1}^{s}(m_{1}+m_{2})^{\bar{s} }}=\mathop{\sum\sum}_{(m_{1},m_{2})\in M}\frac{1}{m_{1}^{s}(m_{1}+m_{2})^{\bar{s} }}+ S_1(\sigma,t,\delta_3) + S_2(\sigma,t,\delta_2),
\end{equation}
 with
\begin{equation}\label{sevenfifteen-b}
S_1(\sigma,t,\delta_3) =
\sum_{m_{1}=1}^{\left[\frac{t}{t^{1-\delta _{3}}-1}\right]-1}
\sum_{m_{2}=\left[(t^{1-\delta _{3}}-1)m_{1}\right]+1}^{[t]} \frac{1}{m_{1}^{s}(m_{1}+m_{2})^{\bar{s} }}
\end{equation}
and
\begin{equation}\label{sevenfifteen-c}
S_2(\sigma,t,\delta_2)=\sum_{m_{1}=\left[t^{1-\delta _{2}}\right]}^{[t]}\sum_{m_{2}=1}^{\left[\frac{m_{1}}{t^{1-\delta _{2}}-1}\right]-1}\frac{1}{m_{1}^{s}(m_{1}+m_{2})^{\bar{s} }}.
\end{equation}

Thus, estimating the sum \eqref{seven_n_1} requires estimating the sum $S_1$ and $S_2$. The relevant estimates are presented in Theorems \ref{lemma7.4} and \ref{l7.5} below.

By making the change of variables $m_1=m$ and $m_1+m_2=n$ in \eqref{sevenfifteen-b} we can rewrite $S_1$ in the form
\begin{equation}\label{sevenfifteen-d}
S_1(\sigma,t,\delta_3) =
\sum_{m=1}^{\left[\frac{t}{t^{1-\delta _{3}}-1}\right]-1}
\sum_{n=\left[t^{1-\delta_3 }m\right]+1}^{[t]+m}
\frac{1}{m^{s}n^{\bar{s}}}.
\end{equation}

Using the equation
 $$\frac{t}{t^{1-\delta _{3}}-1}=\frac{t}{t^{1-\delta _{3}}\left(1-t^{\delta_3-1}\right)}=t^{\delta _{3}}\left( 1 + O\left(t^{\delta_3-1}\right)\right), \ \ 0<\delta_3<1, \qquad t\to \infty,$$ 
it follows that for $\delta_3<1/2$, the upper bound of the expression $\left[\frac{t}{t^{1-\delta _{3}}-1}\right]-1$  is equal either to $\left[t^{\delta _{3}}\right]$  or  to $\left[t^{\delta _{3}}\right]-1$. Thus, it is sufficient to consider the following form of $S_1$:
\begin{equation}\label{sevenfifteen-e}
S_1(\sigma,t,\delta_3) =
\sum_{m=1}^{\left[t^{\delta _{3}}\right]}
\sum_{n=\left[t^{1-\delta_3 }m\right]+1}^{[t]+m}
\frac{1}{m^{s}n^{\bar{s}}}.
\end{equation}

Regarding the sum $S_2$,  by using the fact that $$\frac{m_1}{t^{1-\delta _{2}}-1}=\frac{m_1}{t^{1-\delta _{2}}}\left(1 + O\left(t^{\delta_2-1}\right)\right)=\frac{m_1}{t^{1-\delta _{2}}} + O\left(t^{2\delta_2-1}\right), \ \ 0<\delta_2<1, \qquad t\to \infty,$$
we conclude that $\left[\frac{m_1}{t^{1-\delta _{2}}-1}\right]-1$ is equal either to $\left[\frac{m_{1}}{t^{1-\delta _{2}}}\right]-1$ or to $\left[\frac{m_{1}}{t^{1-\delta _{2}}}\right]$, for $\delta_2<1/2$.

Thus, it is sufficient to consider the following form of $S_2$:
\begin{equation}\label{sevenfifteen-f}
S_2(\sigma,t,\delta_2)=\sum_{m_{1}=\left[t^{1-\delta _{2}}\right]}^{[t]}\sum_{m_{2}=1}^{\left[\frac{m_{1}}{t^{1-\delta _{2}}}\right]}\frac{1}{m_{1}^{s}(m_{1}+m_{2})^{\bar{s} }}.
\end{equation}

\begin{theorem} \label{lemma7.4}
Define the double sum $S_{1}$ by 
\begin{equation} \label{seveneighteen}
S_{1}(\sigma,t,\delta )=\sum_{m=1}^{\left[t^\delta\right]}\sum_{n=\left[t^{1-\delta }m\right]+1}^{[t]+m}
\frac{1}{m^{s}n^{\bar{s}}}, \ \qquad 0<\delta<1, \quad
s=\sigma +it, \quad 0<\sigma <1, \quad t>0.
\end{equation}
 Then,
\begin{equation} \label{sevennineteen}
S_{1}(\sigma,t,\delta ) = O\left( t^{\frac{1}{2}-\sigma}\tilde{G}(\sigma,t,\delta) \right) + O\left( \frac{t^{(1-\sigma)\delta}}{t^\sigma} \right), \qquad 0<\sigma<1, ~ t\to\infty,
\end{equation}
where
\begin{equation} \label{seventwenty}
\tilde{G}(\sigma,t,\delta ) = O \left(  t^{(1-\sigma)\delta} \right) + O \left(  t^{\sigma\delta} \right), \qquad 0<\sigma<1, \quad \sigma\ne\frac{1}{2}, \quad t\to\infty,
\end{equation}
and
\begin{equation} \label{seventwentyone}
\tilde{G}\left(\frac{1}{2},t,\delta\right) = O \left(  t^{\frac{\delta}{2}} \ln{t} \right), \qquad t\to\infty.
\end{equation}
\end{theorem}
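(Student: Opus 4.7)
The plan is to split the inner $n$-sum at the threshold $n=[t]$, writing $S_1=S_1^{\sharp}+S_1^{\flat}$, where $S_1^{\sharp}$ collects the terms with $[t]+1\le n\le [t]+m$ and $S_1^{\flat}$ collects those with $[t^{1-\delta}m]+1\le n\le [t]$. The upper piece, which lives above $t$, will be handled by the first identity \eqref{FL-eq} of Lemma \ref{l2.1} and will supply only the secondary error $O(t^{(1-\sigma)\delta-\sigma})$ in \eqref{sevennineteen}; the lower piece, which lives below $t$, will be handled by the second identity \eqref{FR} and will supply the dominant contribution $O(t^{1/2-\sigma}\tilde G(\sigma,t,\delta))$.

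For $S_1^{\sharp}$, I would apply \eqref{FL-eq} to the inner sum with $s$ replaced by $\bar s$ (legitimate by conjugating the identity) and $\eta=2\pi([t]+m)$, obtaining $\sum_{n=[t]+1}^{[t]+m}n^{-\bar s}=(1-\bar s)^{-1}(t+m)^{1-\bar s}+O(t^{-\sigma})$. Since $|(1-\bar s)^{-1}|=O(t^{-1})$ and $|(t+m)^{1-\bar s}|=O(t^{1-\sigma})$, both the leading term and the error contribute at most $O(t^{-\sigma})$ per $m$; multiplying by $m^{-\sigma}$ and using the trivial bound $\sum_{m\le t^\delta}m^{-\sigma}=O(t^{(1-\sigma)\delta})$ from step (4) of the proof of Lemma \ref{l2.2} yields $S_1^{\sharp}=O(t^{(1-\sigma)\delta-\sigma})$, which is the second error term in \eqref{sevennineteen}.

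For $S_1^{\flat}$, the idea is to apply \eqref{FR} (with $s\mapsto\bar s$) to the inner sum with parameters $\eta_1=t/([t]+m)\sim 1$ and $\eta_2=t/[t^{1-\delta}m]\sim t^\delta/m$, chosen so that $[t/\eta_1]=[t]$ and $[t/\eta_2]=[t^{1-\delta}m]$. For $\delta<1/2$ one verifies $\varepsilon<\eta_1<\eta_2<\sqrt t$ uniformly in $m\ge 1$, and the distance-from-$2\pi\Z$ conditions can be enforced by an infinitesimal perturbation of $\eta_j$. The identity then produces a reflected sum of the form $\overline{\chi(s)}\sum_{n=1}^{[\eta_2/(2\pi)]}n^{-(1-\bar s)}$ together with error terms $\overline{E(\sigma,t,\eta_j)}$. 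Using the classical asymptotic $|\chi(s)|\sim(t/(2\pi))^{1/2-\sigma}$ and the trivial bound $\sum_{n\le t^\delta/m}n^{-(1-\sigma)}=O((t^\delta/m)^{\sigma})$ (valid for all $\sigma\in(0,1)$), the modulus of the reflected-sum contribution is bounded by
\[
t^{1/2-\sigma}\cdot t^{\sigma\delta}\sum_{m=1}^{[t^\delta]}m^{-2\sigma};
\]
evaluating the $m$-sum case by case (it is $O(t^{(1-2\sigma)\delta})$ for $\sigma<1/2$, $O(\ln t)$ for $\sigma=1/2$, and $O(1)$ for $\sigma>1/2$) recovers exactly the bound $O(t^{1/2-\sigma}\tilde G(\sigma,t,\delta))$ in the form \eqref{seventwenty}--\eqref{seventwentyone}.

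The principal obstacle will be the detailed bookkeeping of the $E$-errors in \eqref{seventhirtysixa}, which split according to whether $\eta_2\lessgtr t^{1/3}$, i.e.\ $m\gtrless t^{\delta-1/3}$: in each regime one must show that $\sum_m m^{-\sigma}\overline{E(\sigma,t,\eta_j)}$ is dominated by the stated bound, using the explicit leading piece $(\eta/t)^s/\alpha$ and the ranges of $\alpha,\beta,\gamma$ recorded in the Remark after Lemma \ref{l2.1}. A further technicality is the extension to $\delta\ge 1/2$, where $\eta_2>\sqrt t$ for the small values $m<t^{\delta-1/2}$ and \eqref{FR} is inapplicable to those terms; a separate treatment using the classical estimate \eqref{est0} on the inner partial sum up to $[t]$ is required to handle this thin tail and verify it is absorbed into the same bound.
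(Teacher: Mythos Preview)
Your proposal is correct and follows essentially the same route as the paper: split at $n=[t]$, handle the upper piece $S_1^{\sharp}$ via \eqref{FL-eq} to get $O(t^{(1-\sigma)\delta-\sigma})$, and handle the lower piece $S_1^{\flat}$ via \eqref{FR} with $\eta_1\approx 1$, $\eta_2\approx t^{\delta}/m$, then bound the reflected double sum trivially and multiply by $|\chi(\bar s)|\sim t^{1/2-\sigma}$. Your written choice $\eta_1=t/([t]+m)$ is a slip (it would give $[t/\eta_1]=[t]+m$, not $[t]$; the paper simply takes $\eta_1=1$), and the paper is in fact less scrupulous than you are about the $E$-error bookkeeping and the restriction $\delta<1/2$ needed for $\eta_2<\sqrt{t}$.
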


\begin{proof}

It is convenient to split the $S_1$ sum in terms of the following two sums:
\begin{equation} \label{seventwentyseven}
S_{A}(\sigma,t,\delta )=\sum_{m=1}^{\left[t^{\delta}\right]}\sum_{n=\left[ t^{1-\delta }m\right]+1}^{[t]}
\frac{1}{m^{s}n^{\bar{s}}}, \quad 0<\sigma <1, \quad t>0,
\end{equation}
and
\begin{equation} \label{seventwentyeight}
S_{B}(\sigma,t,\delta )=\sum_{m=1}^{\left[t^{\delta}\right]}\sum_{n=[t]+1}^{[t]+m}
\frac{1}{m^{s}n^{\bar{s}}}, \quad 0<\sigma <1, \quad t>0.
\end{equation}
Thus, computing $S_1$ reduces to computing $S_A$ and $S_B$:
\begin{equation} \label{seventwentynine}
S_{1}(\sigma,t,\delta ) = S_{A}(\sigma,t,\delta ) + S_{B}(\sigma,t,\delta ).
\end{equation}
We first analyze $S_B$. In order to estimate the $n$-sum of $S_B$ we employ the identity \eqref{FL-eq} with $\eta=2\pi (t +m)$, equivalently $\left[\frac{\eta}{2\pi}\right]=[t]+m$ :
\begin{equation}  \label{seventhirty}
\sum_{n=[t]+1}^{[t]+m}\frac{1}{n^{\bar{s}}}=\frac{1}{1-\bar{s}}\left(t+m\right)^{1-\bar{s}}+O(t^{-\sigma}), \quad 0<\sigma <1, \quad t\to\infty.
\end{equation}
We note that
\begin{align*}
\frac{1}{1-\bar{s}} \frac{\left(t+m\right)^{1-\bar{s}}}{m^s} &= \frac{1}{1-\sigma + it} \frac{\left(t+m\right)^{1-\sigma}}{m^{\sigma}} \frac{\left(t+m\right)^{it}}{m^{it}} \\ 
&= - \frac{i}{1-\frac{i(1-\sigma)}{t}} t^{-\bar{s}} \left( 1+\frac{m}{t} \right)^{1-\sigma} \frac{1}{m^{\sigma}} \left( \frac{1}{t} + \frac{1}{m}  \right)^{it}.
\end{align*}
Using this expression in \eqref{seventhirty} and then substituting the resulting sum in \eqref{seventwentyeight} we find
\begin{multline} \label{seventhirtyone}
S_{B}(\sigma,t,\delta )= O(t^{-\sigma}) \sum_{m=1}^{\left[ t^{\delta}\right]}\frac{1}{m^{s}} + O(t^{-\sigma}) \sum_{m=1}^{\left[ t^{\delta}\right]} \left\{ \left( 1 + \frac{m}{t}\right)^{1-\sigma} \right.\\
\left. \times  \frac{1}{m^{\sigma}} \left( \frac{1}{t} + \frac{1}{m}\right)^{it}\right\}, \quad 0<\sigma <1, \quad t\to\infty.
\end{multline}

Using the fact that the function
\begin{equation*}
\left( 1 + \frac{m}{t}\right)^{1-\sigma}, \quad 1\le m \le t^{\delta}, \quad 0\le\sigma\le 1, \quad t>0,
\end{equation*}
is bounded, and employing the classical result on partial summation of single sums, see for example 5.2.1 of \cite{T}, it is possible to associate the second sum appearing in \eqref{seventhirtyone} with 
\begin{equation} \label{seventhirtytwo}
\tilde{S}_{B}(\sigma,t,\delta) = \sum_{m=1}^{\left[t^{\delta}\right]} \frac{1}{m^{\sigma}}e^{if(m)},
\end{equation}
where $f(m)$ is defined in \eqref{sf1}. Furthermore, recalling that  $\tilde{S}_{B}$ can be estimated using \eqref{B.7}, we obtain
\begin{equation} \label{seventhirtythree}
\tilde{S}_{B}(\sigma,t,\delta) = O\left( t^{(1-\sigma)\delta} \right), \quad t\to\infty,
\end{equation}
hence, it follows that
\begin{equation*}
\left|\sum_{m=1}^{\left[t^{\delta}\right]} \left(1+ \frac{m}{t}\right)^{1-\sigma}\frac{1}{m^{\sigma}} \left( \frac{1}{t} + \frac{1}{m}\right)^{it} \right| = O\left( t^{(1-\sigma)\delta} \right), \qquad  t\to \infty.
\end{equation*}

The first sum in \eqref{seventhirtyone} satisfies an identical estimate with the above, and then equation \eqref{seventhirtyone} implies
\begin{equation} \label{seventhirtyfour}
S_{B}(\sigma,t,\delta) = O\left( t^{-\sigma+(1-\sigma)\delta} \right), \quad t\to\infty.
\end{equation}

We next analyze $S_A$.  For the evaluation of the $n$-sum in the double sum $S_A$ defined in \eqref{seventwentyseven}  we will employ the  asymptotic formula \eqref{FR} with
\begin{equation*}
\frac{t}{\eta_1} = t, \quad \text{i.e.}, \quad \eta_1 = 1,
\end{equation*}
and
\begin{equation*}
\frac{t}{\eta_2} + 1 = t^{1-\delta}m +1, \quad \text{i.e.}, \quad \eta_2 = \frac{t^{\delta}}{m}.
\end{equation*}
If $m=1$ then $\eta_2=t^{\delta}$, and if $m=t^{\delta}$ then $\eta_2=1$. Thus, the inequalities in \eqref{FR} are satisfied and hence equation \eqref{FR} yields
\begin{equation} \label{seventhirtyeight}
\sum_{n=\left[t^{1-\delta}m\right]+1 }^{[t]} \frac{1}{n^{\bar{s}}} = \chi(\bar{s}) \sum_{n=1}^{\left[ \frac{t^{\delta}}{2\pi m}\right] }  \frac{1}{n^{1-\bar{s}}} + \bar{E}\left(\sigma,t,\frac{t^{\delta}}{m}\right) -  \bar{E}(\sigma,t,1),~ 0<\sigma<1, ~ t\to\infty.
\end{equation}

Inserting \eqref{seventhirtyeight} into the definition \eqref{seventwentyseven} of $S_A$ we find
\begin{multline} \label{seventhirtynine}
S_A(\sigma,t,\delta) = \chi(\bar{s}) \sum_{m=1}^{\left[ t^{\delta} \right] } \sum_{n=1}^{ \left[ \frac{t^{\delta}}{2\pi m}\right] } \frac{1}{m^s} \frac{1}{n^{1-\bar{s}}} + \chi(\bar{s})  \sum_{m=1}^{\left[t^{\delta}\right]} \left[  \bar{E}\left(\sigma,t,\frac{t^{\delta}}{m}\right) -  \bar{E}(\sigma,t,1) \right], \\
\quad 0<\sigma<1, \quad t\to\infty.
\end{multline}
The occurrence of the term $t^{\delta}$ in the above sums implies that these sums can be easily estimated:
\begin{multline} \label{sevenforty}
\left| \sum_{m=1}^{\left[ t^{\delta}\right]} \sum_{n=1}^{ \left[ \frac{t^{\delta}}{2\pi m} \right] } \frac{1}{m^s} \frac{1}{n^{1-\bar{s}}} \right| \le \int_{1}^{t^{\delta}} \text{d}x ~ x^{-\sigma} \int_{1}^{\frac{t^{\delta}}{2\pi x}} \text{d}y ~ y^{\sigma-1} = \tilde{G}(\sigma,t,\delta), \\
0<\sigma<1, \quad t\to\infty,
\end{multline}where
\begin{equation*}
\tilde{G}(\sigma,t,\delta) = O \left(  t^{(1-\sigma)\delta} \right) + O \left(  t^{\sigma\delta} \right), \quad 0<\sigma<1, \quad \sigma \ne \frac{1}{2}, \quad t\to\infty,
\end{equation*}
and
\begin{equation*}
\tilde{G}\left( \frac{1}{2} ,t,\delta \right) = O \left(  t^{\frac{\delta}{2}} \ln{t} \right), \quad t\to\infty.
\end{equation*}

Recalling the asymptotic formula
\begin{equation} \label{sevenfortyone}
\chi(s) = \left(\frac{2\pi}{t}\right)^{s-\frac{1}{2}} e^{it}  e^{\frac{i\pi}{4}} \left( 1 +O \left( \frac{1}{t}\right) \right), \quad 0<\sigma<1, \quad t\to\infty,
\end{equation}
which is derived in the Appendix A of \cite{FL}, it follows that
\begin{equation} \label{sevenfortytwo}
S_A = O \left( t^{\frac{1}{2}-\sigma}\right) \tilde{G}(\sigma,t, \delta), \quad t\to\infty.
\end{equation}
Equations \eqref{seventwentynine}, \eqref{seventhirtyfour} and \eqref{sevenfortytwo} imply \eqref{sevennineteen}.

\end{proof}

For the estimation of $S_A$, which gives the dominant contribution of $S_1$, one can also use an alternative approach, which is based on classical techniques appearing in \cite{T,T2}, and obtain slightly weaker, but essentially similar results. In this connection we obtain the following Lemma:

\begin{lemma}\label{lemma5.1}
Let $S_A$ be defined by \eqref{seventwentyseven}. Then
\begin{equation}\label{est-n-cl-1}
S_A=O\left( t^{\frac{1}{2}-\sigma} \ln t \right) \tilde{G}(\sigma,t,\delta) , \qquad 0<\sigma<1, ~ t\to\infty.
\end{equation}
\end{lemma}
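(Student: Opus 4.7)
The plan is to replicate the organisation of the proof of Theorem \ref{lemma7.4}, but, instead of applying the exact asymptotic identity \eqref{FR} to the inner $n$-sum, to estimate it by Abel partial summation combined with the classical bound \eqref{est0} specialised to $\sigma=0$. The extra $\ln t$ factor in \eqref{est-n-cl-1} relative to \eqref{sevennineteen} will come directly from the logarithm produced by \eqref{est0} at $\sigma=0$.

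The first step is, for each fixed $m\in\{1,\dots,[t^\delta]\}$, to write the inner sum as
\begin{equation*}
I_m := \sum_{n=N_m+1}^{[t]} \frac{1}{n^{\bar s}} = \sum_{n=N_m+1}^{[t]} n^{-\sigma}\, n^{it},
\qquad N_m := [t^{1-\delta}m],
\end{equation*}
and to apply Abel summation with the smooth weight $n^{-\sigma}$ against the exponential factor $n^{it}$. This reduces $|I_m|$ to $\sup_x |T_m(x)|$ multiplied by $O(N_m^{-\sigma})$, where $T_m(x):=\sum_{N_m<n\le x} n^{it}$. The classical bound $\sum_{n\le x} n^{it} = O(t^{1/2}\ln t)$, which holds uniformly for $1\le x\le [t]$ as a consequence of the Weyl/van der Corput machinery in Chapter 5 of \cite{T}, gives $|T_m(x)| = O(t^{1/2}\ln t)$ uniformly in $x$. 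Combining these ingredients I obtain
\begin{equation*}
|I_m| = O\bigl(t^{1/2}\ln t\cdot N_m^{-\sigma}\bigr) = O\bigl(t^{\frac12 -(1-\delta)\sigma}\, m^{-\sigma}\ln t\bigr).
\end{equation*}

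Multiplying by the outer factor $m^{-\sigma}$ and bounding $|S_A|\le \sum_m m^{-\sigma}|I_m|$ then yields
\begin{equation*}
|S_A| = O\!\left(t^{\frac12-(1-\delta)\sigma}\ln t\right)\sum_{m=1}^{[t^\delta]} m^{-2\sigma},
\end{equation*}
in which the remaining single sum equals $O(t^{\delta(1-2\sigma)})$ for $\sigma<\tfrac12$, $O(\ln t)$ for $\sigma=\tfrac12$, and $O(1)$ for $\sigma>\tfrac12$. A short case-by-case check then identifies the outcome with $O(t^{\frac12-\sigma}\ln t)\,\tilde G(\sigma,t,\delta)$: for $\sigma<\tfrac12$ the exponent reassembles as $\tfrac12-\sigma+\delta(1-\sigma)$, matching the $t^{\delta(1-\sigma)}$ term of $\tilde G$; for $\sigma>\tfrac12$ it reassembles as $\tfrac12-\sigma+\delta\sigma$, matching the $t^{\delta\sigma}$ term; and for $\sigma=\tfrac12$ the extra $\ln t$ from the $m$-sum combines with the one from $T_m$ to give $O(t^{\delta/2}(\ln t)^2)$, which is exactly $O(t^{1/2-\sigma}\ln t)\,\tilde G(\tfrac12,t,\delta)$ by \eqref{seventwentyone}.

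The main technical obstacle is guaranteeing the uniformity of the classical estimate $\sum_{n\le x} n^{it} = O(t^{1/2}\ln t)$ in the upper endpoint $x\in[1,[t]]$, since it is this uniformity that allows $|T_m(x)|$ to be controlled for every $m$ in a single stroke. The uniform version is standard and embedded in the derivation of \eqref{est0} given in \cite{T,T2}, so the obstacle is really one of careful citation rather than of genuine new content; the case $\sigma=\tfrac12$, where the harmonic-like sum $\sum_{m\le t^\delta} m^{-1}$ contributes the extra logarithm, is precisely what explains the slight weakening of \eqref{est-n-cl-1} relative to Theorem \ref{lemma7.4}.
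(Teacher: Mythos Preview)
Your proposal is correct and follows essentially the same route as the paper's own proof: the paper also bounds $|S_A|$ by $\sum_{m\le t^\delta} m^{-\sigma}\,|\sum_{n}n^{-\bar s}|$, estimates the inner sum via partial summation together with the van der Corput bound (citing Theorem~5.9 of \cite{T} for $\sum_{a<n\le 2a} n^{it}=O(t^{1/2})+O(at^{-1/2})$, which after dyadic assembly is exactly your uniform $O(t^{1/2}\ln t)$), obtains $\bigl|\sum_n n^{-\bar s}\bigr|=O\bigl(t^{1/2}(t^{1-\delta}m)^{-\sigma}\ln t\bigr)$, and then sums $m^{-2\sigma}$ over $m\le t^\delta$ with the same three-case split. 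The only difference is cosmetic, in how the classical exponential-sum input is cited.
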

\begin{proof}
Observing that $m$ takes relatively ``small" values in the set of summation of $S_A$, we use the following inequality without losing crucial information $$\left| S_A\right| <\sum_{m=1}^{\left[t^{\delta}\right]} \frac{1}{m^\sigma}\left|\sum_{n=\left[ t^{1-\delta }m\right]+1}^{[t]}
\frac{1}{n^{\bar{s}}} \right| . $$
Then, we estimate the $n$-sum using Theorem 5.9 of \cite{T}, namely
$$\sum_{a<n\leq b\leq 2a} n^{it} = O\left(t^\frac{1}{2}\right) + O\left(a t^{-\frac{1}{2}}\right).$$
Using partial summation and the fact that $a>m  t^{1-\delta }$, similarly to the proof of Theorem 5.12 of \cite{T}, we obtain that 
$$\sum_{n=\left[ t^{1-\delta }m\right]+1}^{[t]} \frac{1}{n^{\bar{s}}} = O\left( t^\frac{1}{2} t^{-(1-\delta)\sigma} m^{-\sigma} \ln t\right), \qquad t\to\infty.$$

Thus,
\begin{equation}\label{est-n-cl-2}
S_A=\sum_{m=1}^{\left[t^{\delta}\right]} \frac{1}{m^{2\sigma}} O\left( t^{\frac{1}{2}-\sigma} t^{\delta\sigma} \ln t\right), \qquad t\to\infty.
\end{equation}

Applying in \eqref{est-n-cl-2} the fact that
$$\sum_{m=1}^{\left[t^{\delta}\right]} \frac{1}{m^{2\sigma}}=\begin{cases} O\left(t^{(1-2\sigma)\delta}\right), & 0<\sigma<\frac{1}{2},\\
O(\ln t),  & \sigma=\frac{1}{2}, \\
O(1), & \frac{1}{2}<\sigma<1,
\end{cases}$$
yields \eqref{est-n-cl-1}.
\end{proof}

\begin{remark}
The estimates of $S_A$ given in \eqref{sevenfortytwo} and  \eqref{est-n-cl-1} differ only by a $\ln t$ term. The approach in the proof of Lemma \ref{lemma5.1} implies that for $0<\delta<1/3,$ the estimate of $S_A$ is essentially the best which one should expect via the classical techniques presented in \cite{T,T2,K}. In particular, for $\sigma=1/2$, these techniques together with Theorem 5.14 of \cite{T}, suggest the estimate
\begin{equation*}
S_A\left(\frac{1}{2},t,\delta\right)=\begin{cases}O\left( t^{\frac{\delta}{2}} (\ln t)^2 \right) , & 0<\delta<\frac{1}{3},\\ 
O\left( t^{\frac{1}{6}} (\ln t)^2 \right) , & \frac{1}{3}\leq\delta<1, \end{cases}
\qquad t\to\infty.
\end{equation*}
Theorem 1 of \cite{T2} together with the Theorem 2.16 of \cite{K}, does not appear to give an essential improvement of the above estimate.
\end{remark}

\begin{theorem} \label{l7.5}
Define the double sum $S_2(\sigma,t,\delta)$ by
\begin{equation} \label{sevenfortythree}
S_{2}(\sigma,t,\delta )=\sum_{m_{1}=\left[ t^{1-\delta} \right]}^{[t]} \sum_{m_{2}=1}^{\left[\frac{m_1}{t^{1-\delta}}\right]}
\frac{1}{m_{1}^{s}(m_{1}+m_{2})^{\bar{s} }}, \quad 0<\delta<1, \ s=\sigma+it, ~ 0<\sigma<1, ~t>0.
\end{equation}
 Then,
\begin{equation} \label{sevenfortyfour}
S_{2}(\sigma,t,\delta ) = O \left(  t^{1-2\sigma+(2\sigma+1)\delta} \right), \quad 0<\sigma<1, \quad t\to\infty.
\end{equation}
\end{theorem}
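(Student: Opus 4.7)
The plan is to prove the estimate for $S_2$ by the \emph{crudest} possible device: bound $|S_2|$ by the number of terms in the summation region multiplied by the supremum of $|m_1^{-s}(m_1+m_2)^{-\bar s}|$ over that region. This strategy works because the summation region in \eqref{sevenfortythree} is comparatively ``small''---one has $m_1\ge [t^{1-\delta}]$ and $m_2\le m_1/t^{1-\delta}\le t^{\delta}$---so both the count of pairs and the maximal summand admit clean closed-form estimates without any need to exploit oscillation in the phase.

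First I would count the pairs. Since the inner sum has at most $m_1/t^{1-\delta}$ terms,
$$\#\bigl\{(m_1,m_2)\text{ in the summation region}\bigr\}
\;\le\; \sum_{m_1=[t^{1-\delta}]}^{[t]}\!\frac{m_1}{t^{1-\delta}}
\;\le\; \frac{1}{t^{1-\delta}}\!\int_{t^{1-\delta}}^{t}\! x\,dx + O(t^{\delta})
\;=\; O\bigl(t^{1+\delta}\bigr).$$
Next, since $|m_1^{-s}|=m_1^{-\sigma}$ and $|(m_1+m_2)^{-\bar s}|=(m_1+m_2)^{-\sigma}$, with both $m_1$ and $m_1+m_2$ bounded below by $[t^{1-\delta}]$,
$$\sup_{(m_1,m_2)\text{ in region}} \left| \frac{1}{m_1^{s}(m_1+m_2)^{\bar s}} \right|
\;\le\; \bigl(t^{1-\delta}\bigr)^{-2\sigma}
\;=\; t^{-2\sigma(1-\delta)}.$$
Combining the two bounds gives $|S_2|=O\bigl(t^{1+\delta}\cdot t^{-2\sigma(1-\delta)}\bigr)=O\bigl(t^{1+\delta-2\sigma+2\sigma\delta}\bigr)=O\bigl(t^{1-2\sigma+(2\sigma+1)\delta}\bigr)$, which is the asserted estimate \eqref{sevenfortyfour}.

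There is no serious obstacle: the proof is purely a matter of verifying that the product ``number of pairs'' $\times$ ``maximum summand'' simplifies to $t^{1-2\sigma+(2\sigma+1)\delta}$. In principle one could sharpen the result, for instance by using $(m_1+m_2)^{-\sigma}\le m_1^{-\sigma}$ and summing $m_1^{1-2\sigma}/t^{1-\delta}$, or by attempting to exploit oscillation in $e^{it[\ln(m_1+m_2)-\ln m_1]}$ via van der Corput-type estimates; such refinements would give a better exponent, but they are not required for the statement of Theorem~\ref{l7.5} and therefore need not be pursued.
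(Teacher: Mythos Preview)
Your proposal is correct and follows essentially the same approach as the paper: the paper also discards oscillation by taking absolute values, bounds the summand by $t^{-2\sigma(1-\delta)}$ using $m_1,\,m_1+m_2\ge t^{1-\delta}$, and then estimates the size of the summation region as $O(t^{1+\delta})$; the only cosmetic difference is that the paper phrases the area estimate via the double integral $\int_{t^{1-\delta}}^{t}\int_{1}^{x/t^{1-\delta}}dy\,dx$ rather than as ``number of pairs $\times$ sup''.
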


\begin{proof}
We find more convenient to treat this sum using some of the `crude' methods, involving the integration, in order to benefit from the smallness of the set of summation.
Indeed, we observe that
\begin{align}\label{est-cr-n-1}
\left| S_2 \right| \leq \int_{t^{1-\delta}}^{t} \int_{1}^{\frac{x}{t^{1-\delta}}}
\frac{1}{x^{\sigma}(x+y)^{\sigma }} dy dx := J_2(t).
\end{align}
Using the fact that $t>x>t^{1-\delta}$, as well as that $x+y>t^{1-\delta}$, then
\begin{align*}
J_2(t)&<\frac{1}{t^{2\sigma(1-\delta)}} \int_{t^{1-\delta}}^{t} \int_{1}^{\frac{x}{t^{1-\delta}}} dy dx<\frac{1}{t^{2\sigma(1-\delta)}} \int_{t^{1-\delta}}^{t} \int_{1}^{t^\delta} dy dx\\
&=\frac{1}{t^{2\sigma(1-\delta)}} \left( t-t^{1-\delta}\right)\left(t^\delta -1\right) =O \left(  t^{1-2\sigma+(2\sigma+1)\delta} \right).
\end{align*}

\end{proof}

\begin{remark}
Using the techniques developed in \cite{T2} and \cite{K} as are appropriately modified  in Appendix B, we obtain the slightly better estimate
\begin{equation} \label{sevenfiftyeight}
S_2(\sigma,t,\delta) = O \left( t^{1-2\sigma} t^{2\delta\sigma} (\ln{t})^3 \right), \quad 0<\sigma<1, \quad t \to\infty.
\end{equation}

The fact that this result does not provide a significant improvement to \eqref{sevenfortyfour} is due to the fact that in the latter approach we have exploited the smallness of the set of summation via the integration process.
\end{remark}


It is possible to improve further the estimate \eqref{sevenfortyfour}, by obtaining a more accurate estimate of the integral $J_2(t)$. Indeed, applying the following Lemma to \eqref{est-cr-n-1}, we obtain 
\begin{equation}\label{est-cr-n-3}
S_2=O\left(t^{1-2\sigma+\delta}\right),  \qquad 0<\sigma<1, \quad t\to\infty.
\end{equation}

\begin{lemma}\label{l5.2}
Let $J_2(t)$ be defined by \eqref{est-cr-n-1}. Then,
\begin{equation} \label{est-cr-n-2}
J_{2}(t) = \dfrac{t^{1-2\sigma+\delta}}{2(1-\sigma)}  \left(1 + O \left(t^{-2\delta(1-\sigma)}, t^{-\delta} \right) \right),
 \quad 0<\sigma<1, \quad t\to\infty.
\end{equation}
\end{lemma}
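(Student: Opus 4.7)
The plan is to evaluate $J_{2}(t)$ asymptotically by performing the inner $y$-integration in closed form and then exploiting the fact that in the range of integration both the parameter $t^{\delta-1}$ and the variable $1/x$ tend to zero. First I would carry out the inner integral exactly,
\begin{equation*}
\int_{1}^{x/t^{1-\delta}} \frac{dy}{(x+y)^{\sigma}} \;=\; \frac{x^{1-\sigma}}{1-\sigma}\bigl[(1+t^{\delta-1})^{1-\sigma} - (1+1/x)^{1-\sigma}\bigr],
\end{equation*}
so that
\begin{equation*}
J_{2}(t) \;=\; \frac{1}{1-\sigma}\int_{t^{1-\delta}}^{t} x^{1-2\sigma}\bigl[(1+t^{\delta-1})^{1-\sigma} - (1+1/x)^{1-\sigma}\bigr]dx.
\end{equation*}

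Next I would apply the Taylor expansion $(1+z)^{1-\sigma} = 1 + (1-\sigma)z + O(z^{2})$, valid uniformly for $z=t^{\delta-1}$ and for $z=1/x$ with $x \in [t^{1-\delta},t]$ since $1/x \le t^{\delta-1} \to 0$. The key cancellation is that the constant terms drop out, leaving the bracket equal to $(1-\sigma)(t^{\delta-1}-1/x) + O(t^{2(\delta-1)})$, where I have used $1/x^{2} \le t^{2(\delta-1)}$ to absorb the quadratic $1/x^{2}$ contribution. Substituting back splits $J_{2}(t)$ into three pieces:
\begin{equation*}
J_{2}(t) \;=\; t^{\delta-1}\!\int_{t^{1-\delta}}^{t}\! x^{1-2\sigma}dx \;-\; \int_{t^{1-\delta}}^{t}\! x^{-2\sigma}dx \;+\; R(t),\qquad R(t)=O\!\left(t^{2(\delta-1)}\!\int_{t^{1-\delta}}^{t}\! x^{1-2\sigma}dx\right).
\end{equation*}
A direct computation of the first integral gives $\frac{t^{2-2\sigma}}{2(1-\sigma)}(1-t^{-2\delta(1-\sigma)})$, whose product with $t^{\delta-1}$ is precisely the claimed leading term $\frac{t^{1-2\sigma+\delta}}{2(1-\sigma)}$ together with a relative error $O(t^{-2\delta(1-\sigma)})$.

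It then remains to verify that the second and third terms are absorbed into the stated error. The integral $\int_{t^{1-\delta}}^{t} x^{-2\sigma}dx$ is elementary: for $\sigma<\tfrac{1}{2}$ it is $O(t^{1-2\sigma})$, which is a relative error $O(t^{-\delta})$; for $\sigma>\tfrac{1}{2}$ it is $O(t^{(1-\delta)(1-2\sigma)})$, which after dividing by the leading size $t^{1-2\sigma+\delta}$ yields a relative error $O(t^{-2\delta(1-\sigma)})$; and at $\sigma=\tfrac{1}{2}$ it equals $\delta\ln t$, still of relative size $O(t^{-\delta}\ln t)$. Finally, $R(t)=O(t^{2\delta-2\sigma})$, contributing a relative error $O(t^{\delta-1})$, which is dominated by $\max(t^{-\delta},t^{-2\delta(1-\sigma)})$ on the relevant range of $\delta$ (and in particular for $\delta<\tfrac{1}{2}$, which is the regime relevant to the application). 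Collecting these estimates produces exactly \eqref{est-cr-n-2}. The main obstacle, though more bookkeeping than analytic, is the careful verification that each of the subleading pieces really fits under the single symbol $O(t^{-2\delta(1-\sigma)},t^{-\delta})$ uniformly in $\sigma\in(0,1)$, and dealing cleanly with the $\sigma=\tfrac{1}{2}$ logarithm.
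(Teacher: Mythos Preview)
Your approach is essentially identical to the paper's: you compute the inner $y$-integral in closed form, Taylor-expand $(1+t^{\delta-1})^{1-\sigma}$ and $(1+1/x)^{1-\sigma}$, and split $J_2$ into the same three pieces $t^{\delta-1}\int x^{1-2\sigma}\,dx$, $-\int x^{-2\sigma}\,dx$, and the remainder. Your write-up is in fact more careful than the paper's, which simply records the three-term decomposition and asserts that it ``yields \eqref{est-cr-n-2}'' without the case analysis in $\sigma$ or the remark on the $\sigma=\tfrac12$ logarithm that you supply.
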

\begin{proof}
\begin{align*}
J_2(t)&=
\int_{t^{1-\delta}}^{t} \int_{1}^{\frac{x}{t^{1-\delta}}} \frac{1}{x^{\sigma}(x+y)^{\sigma }} dy dx =
\int_{t^{1-\delta}}^{t}  \frac{1}{x^{\sigma}} \left[ x^{1-\sigma}\frac{\left(1+t^{\delta-1}\right)^{1-\sigma}}{1-\sigma} - \frac{(x+1)^{1-\sigma}}{1-\sigma}\right]dx\\
&=\frac{1}{1-\sigma}\int_{t^{1-\delta}}^{t} x^{1-2\sigma} \left[ \left(1+t^{\delta-1}\right)^{1-\sigma} - \left(1+\frac{1}{x}\right)^{1-\sigma}\right]dx\\
&=\frac{1}{1-\sigma}\int_{t^{1-\delta}}^{t} x^{1-2\sigma} \left[ 1+(1-\sigma)t^{\delta-1} - 1 - \frac{1-\sigma}{x} +O\left(t^{2(\delta-1)},\frac{1}{x^2}\right)\right]dx, \quad t\to\infty.
\end{align*}
Using the fact that $x>t^{1-\delta}$, the above integral takes the form
\begin{align*}
J_2(t)&= t^{\delta-1}\int_{t^{1-\delta}}^{t} x^{1-2\sigma}dx - \int_{t^{1-\delta}}^{t} x^{-2\sigma} dx + \int_{t^{1-\delta}}^{t} x^{1-2\sigma}dx \  O\left(t^{2(\delta-1)}\right), \quad t\to\infty,
\end{align*}
which yields \eqref{est-cr-n-2}.
\end{proof}

\begin{theorem} \label{thm5.3}
Let $\delta\in\left(0,\frac{1}{2}\right)$ and the double sum $S_2(\sigma,t,\delta)$ be defined by \eqref{sevenfortythree}.
 Then,
\begin{equation} \label{est-s2-1}
S_{2}(\sigma,t,\delta ) = \begin{cases} O \left(  t^{1-2\sigma} \right) + O \left(  t^{\delta- 2\sigma}\right), \quad & 0<\sigma<\frac{1}{2},\\
 O \left(  \ln t \right) + O \left(  t^{\delta-1} \right), \quad & \sigma=\frac{1}{2}, \end{cases}
 \quad t\to\infty.
\end{equation}
\end{theorem}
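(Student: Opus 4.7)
The bound $J_{2}(t)=O(t^{1-2\sigma+\delta})$ of Lemma~\ref{l5.2} is obtained by majorising the summand of $S_{2}$ in absolute value, which discards the oscillatory factor $e^{it\ln((m_{1}+m_{2})/m_{1})}$. The plan for Theorem~\ref{thm5.3} is to retain this oscillation by approximating the inner sum by its corresponding integral through Euler--Maclaurin summation and then evaluating the resulting expression in closed form, thereby gaining a factor $1/t$ from $|1-\bar{s}|\ge t$.

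For each fixed $m_{1}\in\{[t^{1-\delta}],\dots,[t]\}$, set $M:=[m_{1}/t^{1-\delta}]$ and apply the first-order Euler--Maclaurin formula to obtain
\begin{equation*}
\sum_{m_{2}=1}^{M}\frac{1}{(m_{1}+m_{2})^{\bar{s}}}
=\int_{m_{1}}^{m_{1}+M}\!\!\frac{dy}{y^{\bar{s}}}
+\frac{1}{2}\left(\frac{1}{(m_{1}+M)^{\bar{s}}}-\frac{1}{m_{1}^{\bar{s}}}\right)
+R(m_{1}),
\end{equation*}
where $R(m_{1})$ is the higher-order Euler--Maclaurin remainder. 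The integral evaluates to $[(m_{1}+M)^{1-\bar{s}}-m_{1}^{1-\bar{s}}]/(1-\bar{s})$, and since $|1-\bar{s}|\ge t$ while each power $(\cdot)^{1-\bar{s}}$ has modulus at most $(2m_{1})^{1-\sigma}$, this term is $O(m_{1}^{1-\sigma}/t)$. Multiplying by $|1/m_{1}^{s}|=1/m_{1}^{\sigma}$ and summing over $m_{1}$ produces $(1/t)\sum m_{1}^{1-2\sigma}$, which is $O(t^{1-2\sigma})$ for $0<\sigma<1/2$ and $O(1)$ for $\sigma=1/2$.

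The boundary term $\tfrac{1}{2}((m_{1}+M)^{-\bar{s}}-m_{1}^{-\bar{s}})$ has modulus at most $1/m_{1}^{\sigma}$; multiplying by $1/m_{1}^{\sigma}$ and summing yields $\sum_{m_{1}=[t^{1-\delta}]}^{[t]}m_{1}^{-2\sigma}$, equal to $O(t^{1-2\sigma})$ for $0<\sigma<1/2$ and $O(\ln t)$ for $\sigma=1/2$. This supplies the $O(\ln t)$ main contribution at the critical line, where the integral in the previous step only gives $O(1)$. The secondary errors $O(t^{\delta-2\sigma})$ and $O(t^{\delta-1})$ are expected to come from $R(m_{1})$ together with the discrepancy introduced by the floor function in $M$.

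The main obstacle is controlling $R(m_{1})$ without sacrificing the factor $1/t$ already saved in the main term. A naive bound $|R(m_{1})|\le\int_{m_{1}}^{m_{1}+M}|f''(y)|\,dy$ with $f(y)=y^{-\bar{s}}$ loses a factor $t^{\delta}$ and gives only $O(t^{\delta}\ln t)$ at $\sigma=1/2$, which is too weak. To achieve the sharp secondary error I plan to integrate by parts once more in the sawtooth representation of $R(m_{1})$, exploiting the oscillatory nature of $f'(y)=-\bar{s}\,y^{-\bar{s}-1}$; equivalently, one may expand the Bernoulli polynomial appearing in the Euler--Maclaurin remainder as a Fourier series and estimate each harmonic via a van der Corput-type integration by parts. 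Combining the main term, the boundary correction, and the controlled remainder then yields the claimed estimates.
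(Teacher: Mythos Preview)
Your approach differs from the paper's. After the change of variables $n=m_{1}+m_{2}$, the paper splits $S_{2}=S_{A}+S_{B}$ according to whether $n\le[t]$ or $n>[t]$. For $S_{B}$ the identity \eqref{FL-eq} yields the subdominant $O(t^{\delta-2\sigma})$ term directly. For $S_{A}$ the paper applies the Riemann--Siegel type identity \eqref{FR} to the inner sum $\sum_{n=m+1}^{[m(1+t^{\delta-1})]}n^{-\bar s}$ with $\eta_{2}=t/m$ and $\eta_{1}=\eta_{2}-O(t^{2\delta-1})$; the hypothesis $\delta<\tfrac12$ forces $\eta_{2}-\eta_{1}=o(1)$ and hence $[\eta_{1}/2\pi]=[\eta_{2}/2\pi]$, so the dual sum in \eqref{FR} is empty and only the error terms $E(\sigma,t,\eta_{j})=O(m^{-\sigma})$ survive. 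This gives the inner sum $=O(m^{-\sigma})$ immediately, whence $S_{A}=O\bigl(\sum_{m=[t^{1-\delta}]}^{[t]}m^{-2\sigma}\bigr)$, which is precisely the main term in \eqref{est-s2-1}.

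Your integral and boundary contributions are correct and reproduce exactly this main term, so the strategy is sound up to that point. The genuine gap is the remainder $R(m_{1})=-\bar s\int_{m_{1}}^{m_{1}+M}\psi(y)\,y^{-\bar s-1}\,dy$, and your proposed fix is not sufficient as stated. For the harmonics $k\le-1$ the phase $t\ln y-2\pi|k|y$ has a stationary point at $y=t/(2\pi|k|)$, which for $|k|\lesssim t^{\delta}$ can fall inside $[m_{1},m_{1}+M]$; handling these requires a stationary-phase analysis and careful counting over~$m_{1}$. Even for $k\ge1$, where $g_{k}'=t/y+2\pi k>0$ throughout, one integration by parts gives only $\sum_{k\ge1}|I_{k}|\lesssim m_{1}^{-\sigma}t^{-1}\log(t/m_{1})$, so after multiplying by $|\bar s|\sim t$ and summing over $m_{1}$ you obtain $\sum_{m_{1}}m_{1}^{-2\sigma}\log(t/m_{1})=O\bigl((\ln t)^{2}\bigr)$ at $\sigma=\tfrac12$, missing the sharp $O(\ln t)$ by a logarithm. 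Pushing your programme through to the exact exponent therefore amounts to re-deriving, case by case, the oscillatory-integral analysis that is already encapsulated in \eqref{FR}; the paper's use of that identity as a black box is what makes the argument both short and sharp here.
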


\begin{proof}

Letting $m_1=m$ and $m_1+m_2=n$ in the definition \eqref{sevenfortythree} of $S_2$ we find
\begin{equation} \label{sevenfortyfive}
S_{2}(\sigma,t,\delta ) = \sum_{m=\left[ t^{1-\delta}\right]}^{[t]} \sum_{n=1+m}^{\left[m\left(1+t^{\delta-1}\right)\right]}
\frac{1}{m^{s}n^{\bar{s} }}.
\end{equation}
It is convenient to split the $S_2$ sum in terms of the following two sums:
\begin{equation} \label{sevenfortysix}
S_{A}(\sigma,t,\delta )=\sum_{m=\left[ t^{1-\delta}\right]}^{[t]}\sum_{n=1+m}^{P(t)}
\frac{1}{m^{s}n^{\bar{s}}}, \quad 0<\sigma <1, \quad t>0,
\end{equation}
and
\begin{equation} \label{sevenfortyseven}
S_{B}(\sigma,t,\delta )=\sum_{m=\left[ t^{1-\delta}\right]}^{[t]}\sum_{n=[t]+1}^{\left[ m\left(1+t^{\delta-1}\right)\right]}
\frac{1}{m^{s}n^{\bar{s}}}, \quad 0<\sigma <1, \quad t>0,
\end{equation}
where $P(t)=\min\left\{[t], \left[ m\left(1+t^{\delta-1}\right)\right]\right\}$.

Hence
\begin{equation} \label{sevenfortyeight}
S_{2}(\sigma,t,\delta) = S_{A}(\sigma,t,\delta) + S_{B}(\sigma,t,\delta), \quad 0<\sigma <1, \quad t>0.
\end{equation}
We first analyze $S_B$. In this connection we define the function $l(t)$ by
\begin{equation} \label{sevenfortynine}
l(t) =\left[ t-t^{\delta}\right]+1, \quad t>0.
\end{equation}
We observe that the upper limit of the $n$-sum of $S_B$ is greater or equal to $[t]+1$  only if $m\ge l(t)$. Thus, we rewrite $S_B$ in the form
\begin{equation} \label{sevenfifty}
S_{B}(\sigma,t,\delta )=\sum_{m=l(t)}^{[t]}\sum_{n=[t]+1}^{\left[ m\left(1+t^{\delta-1}\right)\right]}
\frac{1}{m^{s}n^{\bar{s}}}, \quad 0<\sigma <1, \quad t>0.
\end{equation}
In order to estimate the $n$-sum of $S_B$ we employ the identity \eqref{FL-eq} with $\eta=2\pi m\left(1+t^{\delta-1}\right)$:
\begin{equation} \label{sevenfiftyone}
\sum_{n=[t]+1}^{\left[ m\left(1+t^{\delta-1}\right)\right]} \frac{1}{n^{\bar{s}}} = \frac{1}{1-\bar{s}} \left(1+t^{\delta-1}\right) ^{1-\bar{s}} m^{1-\bar{s}} + O(t^{-\sigma}), \qquad 0<\sigma<1, \quad t\to\infty.
\end{equation}
\begin{align} \label{sevenfiftytwo}
S_{B}(\sigma,t,\delta )= -\frac{i}{t} \sum_{m=l(t)}^{[t]} &\frac{m^{1-2\sigma}}{1-\frac{i(1-\sigma)}{t}} \left(  1+t^{\delta-1}  \right)^{1-\bar{s}} \nonumber \\
&+ O (t^{-\sigma}) \sum_{m=l(t)}^{[t]} m^{-s}, \qquad 0<\sigma<1, \quad t \to\infty.
\end{align}
Proceeding as with the evaluation of $S_B$ in Theorem \ref{lemma7.4} we find that
\begin{equation*}
\left| \sum_{m=l(t)}^{[t]} m^{-s}  \right| \le \frac{1}{1-\sigma} \left[  t^{1-\sigma} - \left(t - t^{\delta} +1 \right)^{1-\sigma} \right] 
=O\left(  \frac{t^{\delta}}{t^{\sigma}} \right), \quad 0<\sigma<1, \quad t \to\infty.
\end{equation*}
Similarly, the first sum of \eqref{sevenfiftytwo} is of order $t^{\delta} / t^{2\sigma}$. Thus,
\begin{equation} \label{sevenfiftythree}
S_B(\sigma,t,\delta) = O\left(  \frac{t^{\delta}}{t^{2\sigma}} \right), \quad 0<\sigma<1, \quad t \to\infty.
\end{equation}

We next consider $S_A$. Our approach is based on the application of the asymptotic formula \eqref{FR} in the inner sum of $S_A$.

Indeed, for the case that $P(t)=[m\left(1+t^{\delta-1}\right)]$, by applying \eqref{FR} in the inner sum of $S_A$ with $\frac{t}{\eta_2}=m$ and $\frac{t}{\eta_1}=m\left(1+t^{\delta-1}\right)$, we obtain
$$\eta_2=\frac{t}{m} \qquad \text{and} \qquad \eta_1=\frac{t}{m\left(1+t^{\delta-1}\right)}=\frac{t}{m}-\frac{t^\delta}{m}.$$

Observing that  $\frac{t^\delta}{m}\leq\frac{t^\delta}{t^{1-\delta}}=t^{2\delta-1}=o(1),$ and using that dist$\left(\frac{\eta_j}{2\pi},\mathbb{Z}\right)>\epsilon, \quad j=1,2,$ we obtain that $\left[\frac{\eta_1}{2\pi}\right]=\left[\frac{\eta_2}{2\pi}\right]$.

Similarly, for the case $P(t)=[t]$ we obtain $\left[\frac{\eta_1}{2\pi}\right]=\left[\frac{\eta_2}{2\pi}\right]=0$.

Thus, for the inner sum of $S_A$ the set of the summation of the rhs of \eqref{FR} is empty. Furthermore, the definition of $E(\sigma,t,\eta)$ given in \eqref{seventhirtysixa} implies that $$E(\sigma,t,\eta_j)=O\left(\left(\frac{\eta_j}{t}\right)^\sigma\right)=O\left(\frac{1}{m^\sigma}\right), \qquad j=1,2.$$
Thus,
\begin{equation} \label{sevenfortysix-f}
S_{A}(\sigma,t,\delta )=O\left(\sum_{m=\left[ t^{1-\delta}\right]}^{[t]}\frac{1}{m^{2\sigma}}\right), 
\end{equation}
which along with \eqref{sevenfiftythree} yields the estimate \eqref{est-s2-1}.

\end{proof}

\begin{remark}
For the particular case $\sigma=1/2$ the estimates of $S_2$ given by \eqref{sevenfortyfour}, \eqref{sevenfiftyeight} and \eqref{est-cr-n-3}, take the form $O\left(t^{\frac{3}{2}\delta}\right), \ O\left(t^\delta (\ln t)^3 \right)$ and $O\left(t^\delta\right)$, respectively, which for $\delta>0$ arbitrarily small, are essentially the same.

We note  that even the extensive use of the techniques appearing in Appendix B does not appear to provide an estimate better than $O\left(t^\delta\right)$, which is essentially the same with the estimate obtained via the ``rough" techniques of Lemma \ref{l5.2}, for $\delta\in(0,1)$. 

The result of Theorem \ref{thm5.3} yields the estimate $O(\ln t)$, for $\delta\in\left(0,\frac{1}{2}\right)$, which provides a significant improvement of the classical techniques on the estimate of $S_2$, when $\delta$ is not arbitrarily small. In \cite{Fs}, the sum $S_2$ is estimated via a completely different approach, and this yields the estimate  $O\left(t^{\delta-\frac{1}{2}}\ln t\right)+ O(1)$, for $\delta\in(0,1)$.
\end{remark}

\newpage

\begin{appendices}

\section{\hspace*{-5mm}(proof of \eqref{sixrem7}) }\label{rem6}
Let $\chi(s)$  be defined by \eqref{seventhirtysixd}, then it is shown in \cite{FL} that
\begin{equation}\label{sixrem2}
\chi(s)=\left(\frac{2\pi}{t}\right)^{s-\frac{1}{2}} e^{it} e^{i\frac{\pi}{4}}\left[1+O\left(\frac{1}{t}\right)\right], \qquad s=\sigma+it, \quad \sigma\in \mathbb{R},  \quad t\to \infty.
\end{equation}
Employing the well known identity
\begin{equation}\label{sixrem3}
\zeta(s)=\chi(s) \zeta(1-s), \qquad s\in \mathbb{C},
\end{equation}
with $s=\sigma-1+it,$ we find 
\begin{equation}\label{sixrem4}
\zeta(\sigma-1+it)=\chi(\sigma-1+it) \zeta(2-\sigma-it).
\end{equation}
Suppose that $0<\sigma<1$. Using the fact that $ \zeta(2-\sigma-it)$ is bounded as $t\to\infty$, as well as the asymptotic estimate \eqref{sixrem2}, equation \eqref{sixrem4} implies that
\begin{equation}\label{sixrem5}
\zeta(\sigma-1+it)=O\left(t^{\frac{3}{2}-\sigma}\right), \qquad 0<\sigma<1,\quad t\to\infty.
\end{equation}

Applying equation (3.1) of Theorem 3.1 in \cite{FL}, for $\eta=2\pi t$ we derive the following result:
\begin{align}\label{sixrem6}
\zeta(s) = & \sum_{n =1}^{[t]} n^{-s} - \frac{1}{1-s} t^{1-s} 
	\\ \nonumber
& + \frac{e^{-\frac{i\pi (1-s)}{2}}}{(2\pi)^{1-s}}\sum_{n=1}^\infty  \sum_{j=0}^{N-1} e^{-nz - it\ln{z}} \left(\frac{1}{n + \frac{it}{z}}\frac{d}{dz}\right)^j\frac{z^{-\sigma}}{n + \frac{it}{z}} \biggr|_{z = i2\pi t}
	\\ \nonumber
& + \frac{e^{\frac{i\pi (1-s)}{2}}}{(2\pi)^{1-s}}\sum_{n=2}^\infty
\sum_{j=0}^{N-1} e^{-nz - it\ln{z}} \left(\frac{1}{n + \frac{it}{z}}\frac{d}{dz}\right)^j\frac{z^{-\sigma}}{n + \frac{it}{z}} \biggr|_{z = -i2\pi t}
	\\ \nonumber
&   +O\biggl((2N +1)!! N 2^{2N} t^{-\sigma - N}  \biggr),\qquad  0 \leq \sigma \leq 1, \quad N \geq 2, \quad t \to \infty,	
\end{align}
where the error term is uniform for all $\sigma, N$ in the above ranges and the coefficients $c_k(\sigma)$ are given therein.
This equation is derived in \cite{FL} under the assumption that $0<\sigma<1$. However, it is straightforward to verify that it is also valid for $-1<\sigma<0.$
Equations \eqref{sixrem6} and \eqref{sixrem5} imply that
\begin{equation}\label{sixrem7a}
\sum_{m=1}^{[t]} \dfrac{1}{m^{\sigma-1+it}} = O\left(t^{\frac{3}{2}-\sigma}\right), \qquad 0<\sigma<1,\quad t\to\infty.
\end{equation}

\section{\hspace*{-5mm}(proof of \eqref{sevenfiftyeight})}

Letting $m_1=m$ and $m_1+m_2=n$ in the definition \eqref{sevenfortythree} of $S_2$ we find
\begin{equation} \label{sevenfortyfive-ap}
S_{2}(\sigma,t,\delta ) = S_{A}(\sigma,t,\delta ) + S_{B}(\sigma,t,\delta ), \quad 0<\sigma <1, \quad t>0,
\end{equation}
with $S_A$ and $S_B$ defined in \eqref{sevenfortysix} and \eqref{sevenfortyseven}, respectively.

The analysis in Theorem \ref{thm5.3} yields 
\begin{equation} \label{sevenfiftythree-ap}
S_B(\sigma,t,\delta) = O\left(  \frac{t^{\delta}}{t^{2\sigma}} \right), \quad 0<\sigma<1, \quad t \to\infty.
\end{equation}

We next consider $S_A$.  The derivation of this estimate consists of two parts:
\begin{itemize}
\item[I.] The first part involves the proof
\begin{equation} \label{sevenfiftysix}
\sum_{m=\left[ t^{1-\delta}\right]}^{[t]} \sum_{n=m+1}^{[t]} \frac{1}{m^{it}n^{-it}} = O \left(  t(\ln{t})^3 \right), \quad t\to \infty.
\end{equation}
\item[II.] The second part involves the partial summation technique for double sums.
\end{itemize}

\vspace*{5mm}

For the first part we first prove that
\begin{equation}\label{DS1}
\sum_{m=M}^{M'}\sum_{n=N}^{N'} \dfrac{1}{m^{it}} \dfrac{1}{n^{-it}} = O(t \ln t),
\end{equation}
with $n>m$, and $$\begin{cases}A_1\sqrt{t}<M<M'<2M<A_3 t, \\ A_2\sqrt{t}<N<N'<2N<A_4 t,\end{cases}$$ for some positive constants $\{A_j\}_1^4$.

In this connection, we divide the set of summation similarly to the division implemented in Theorem 1 of \cite{T2}, namely, in ``small" rectangles $\Delta_{p,q}$, such that 
 $$\begin{cases}  M+p \l_1\leq m \leq M+p\l_1+\l_1, \\ N+q \l_2\leq n \leq N+q\l_2+\l_2. \end{cases}$$ 
Moreover, we pick 
\begin{equation}\label{DS2}
\begin{cases} \l_1=c_1\frac{M^2}{t}, \\ \l_2=c_2\frac{N^2}{t}, \end{cases}
\end{equation} for some positive constants $c_1$ and $c_2$.

We make the following observations:
\begin{itemize}
\item $\displaystyle \begin{cases} 1 \leq \l_1 \leq M \Leftrightarrow  A_1\sqrt{t}<M<A_3 t, \\ 1 \leq \l_2 \leq N \Leftrightarrow  A_2\sqrt{t}<N<A_4 t,\end{cases}$ for some positive constants $\{A_j\}_1^4$.
\item The number of the ``small" rectangles $\Delta_{p,q}$  is $O\left(\dfrac{MN}{\l_1 \l_2}\right)$.
\end{itemize}
We use Theorem 2.16 of \cite{K} with $$f(x,y)=t(\ln x - \ln y).$$
Then, in each rectangle $\Delta_{p,q}$, with $n>m$ (equivalently $x>y$), the conditions of this theorem are satisfied with $\lambda_1=\frac{t}{M^2}$ and $\lambda_2=\frac{t}{N^2}$, because $$\big| f_{xx} \big| =\frac{t}{x^2}, \quad \big| f_{yy} \big| =\frac{t}{y^2} \quad \text{ and } \ \big| f_{xy} \big| =0.$$
Using the following facts: 
\begin{itemize}
\item the conditions  $\displaystyle \begin{cases} M > A_1 \sqrt{t}, \\ N> A_2 \sqrt{t}, \end{cases}$ imply that $\displaystyle \begin{cases} \lambda_1 <\frac{1}{A_1^2}, \\ \lambda_2 <\frac{1}{A_2^2} \end{cases}$, 
\item all the quantities $ \ln \big|\Delta_{p,q} \big|, \ \big|\ln \lambda_1 \big|$ and $\big|\ln \lambda_2 \big|$ are of order $O(\ln t)$,
\end{itemize}
and employing equation (2.56) of \cite{K}, we find
\begin{equation}\label{DS3}
\mathop{\sum\sum}_{(m,n)\in \Delta_{p,q}} e^{if(m,n)} = O\left(\dfrac{\ln t}{\frac{t}{MN}}\right).
\end{equation}
Thus, the fact that the number of the rectangles $\Delta_{p,q}$  is $O\left(\dfrac{MN}{\l_1 \l_2}\right)$, implies that
\begin{equation}\label{DS4}
\sum_{m=M}^{M'}\sum_{n=N}^{N'} e^{if(m,n)} = O\left(\dfrac{MN}{t} \ln t \dfrac{MN}{\l_1 \l_2}\right).
\end{equation}
Equation \eqref{DS1} follows from applying \eqref{DS2} in \eqref{DS4}.

Finally, using the classical splitting for the sets of summation for exponential sums, see \cite{T} and \cite{T2}, equation \eqref{sevenfiftysix} follows from applying \eqref{DS1} for  $O\left(\left(\ln t^\delta\right)^2\right)=O\left(\left(\delta\ln t\right)^2\right)=O\left((\ln t)^2\right)$ times.

Considering the second part, under the condition that the expressions 
\begin{equation}\label{psks}
b_{m,n} - b_{m+1,n}, \quad b_{m,n}-b_{m,n+1}, \quad b_{m,n}-b_{m+1,n}-b_{m,n+1}+b_{m+1,n+1},\end{equation}  
keep their sign, the following result is derived in \cite{T2}:
\begin{equation} \label{sevenfiftyfive1}
\left|  \sum_{m=1}^{M} \sum_{n=1}^{N} a_{m,n} b_{m,n} \right| \le 5GH,
\end{equation}
where
\begin{equation} \label{sevenfiftysix1}
S_{m,n} \Doteq \sum_{\mu=1}^{m} \sum_{\nu=1}^{n} a_{\mu,\nu}, \quad |S_{m,n}|\le G, \quad 1\le m\le M, \quad 1\le n\le N,
\end{equation}
with
\begin{equation} \label{sevenfiftyseven}
b_{m,n} \in \mathbb{R}, \quad 0 \le b_{m,n} \le H.
\end{equation}
We apply the above argument for $\displaystyle b_{m,n}=\frac{1}{m^\sigma n^\sigma}$,
thus  the expressions in \eqref{psks} keep their sign, and furthermore $$H=\dfrac{1}{t^{(1-\delta)\sigma}}\dfrac{1}{t^{(1-\delta)\sigma}}=t^{-2\sigma}t^{2\delta\sigma}.$$

Combining the above result with \eqref{sevenfiftysix} yields 
\begin{equation} \label{sevenfiftyeighta}
S_A(\sigma,t,\delta) = O \left( t^{1-2\sigma} t^{2\delta\sigma} (\ln{t})^3 \right), \quad 0<\sigma<1, \quad t \to\infty.
\end{equation}


Equations \eqref{sevenfortyfive-ap}, \eqref{sevenfiftythree-ap}, \eqref{sevenfiftyeighta} imply \eqref{sevenfiftyeight}.

\end{appendices}

\section*{Acknowledgments}

Both authors are supported by the EPSRC, UK.
\vspace{5mm}


\begin{thebibliography}{999}

\bibitem[B]{B} J. Bourgain, Decoupling, exponential sums and the Riemann zeta function, J. Am. Math. Soc., 2017.

\bibitem[F]{Fs} A. S. Fokas, A novel approach to the Lindel{\"o}f hypothesis, arXiv preprint, arXiv:1708.06607, 2018.



\bibitem[FL]{FL} A.S. Fokas and J. Lenells, On the asymptotics to all orders of the Riemann Zeta function and of a two-parameter generalization of the Riemann Zeta function, Memoirs of AMS, to appear.

\bibitem[IM]{IM} H. Ishikawa and K. Matsumoto, On the estimation of the order of Euler-Zagier multiple zeta-functions, Illinois J. Math., 2003.

\bibitem[K]{K} E. Kr{\"a}tzel, Lattice points, Springer, 1989.

\bibitem[KT]{KT} I. Kiuci and Y. Tanigawa, Bounds for double zeta functions, Ann. Scuola Norm. Sup. Pisa Cl. Sci., 2006.

\bibitem[T]{T} E.C. Titchmarsch, The theory of the Riemann Zeta-function, Oxford University Press, 2$^\text{nd}$ ed., 1987.

\bibitem[T2]{T2} E.C. Titchmarsch, On Epstein's Zeta-function, Proc. London Math. Soc., 1934.









\end{thebibliography}
\end{document}